\theoremstyle{definition}
\newtheorem{theorem}{Theorem}[section]
\newtheorem{lemma}[theorem]{Lemma}
\newtheorem{ex}[theorem]{Example}
\newtheorem{pro}{Problem}
\numberwithin{equation}{section}
\numberwithin{table}{section}
\numberwithin{figure}{section}
\newcommand{\divt}{\operatorname{div}}
\newcommand{\tdiv}{ \underaccent{\tilde}{\divt}}
\newcommand{\Div}{\operatorname{Div}}
\newcommand{\tDiv}{ \underaccent{\tilde}{\Div}}
\newcommand{\dD}{\operatorname{divDiv}} %
\newcommand{\tdD}{\underaccent{\tilde}{\dD}}
\newcommand{\nab}{\nabla}
\newcommand{\tnab}{\underaccent{\tilde}{\nab}}
\newcommand{\nabs}{{\nabla_{\mathrm{s}}}}  
\newcommand{\tnabs}{{\underaccent{\tilde}{\nabs}}} 
\newcommand{\tpar}{\underaccent{\tilde}{\partial}}
\newcommand{\rd}{\mathrm{d}}
\newcommand{\be}{\boldsymbol{e}}
\newcommand{\tbe}{\underaccent{\tilde}{\be}} 
\newcommand{\bft}{\boldsymbol{f}}  %
\newcommand{\tf}{\underaccent{\tilde}{f}} 
\newcommand{\tbf}{\underaccent{\tilde}{\bft}} 
\newcommand{\bl}{\boldsymbol{l}}
\newcommand{\tbl}{\underaccent{\tilde}{\bl}}
\newcommand{\bn}{\boldsymbol{n}}
\newcommand{\tn}{\underaccent{\tilde}{n}}
\newcommand{\tbn}{\underaccent{\tilde}{\bn}}
\newcommand{\ts}{\underaccent{\tilde}{s}}
\newcommand{\bt}{\boldsymbol{t}}
\newcommand{\ttt}{\underaccent{\tilde}{t}} %
\newcommand{\tbt}{\underaccent{\tilde}{\bt}}
\newcommand{\bu}{\boldsymbol{u}}
\newcommand{\tu}{\underaccent{\tilde}{u}}
\newcommand{\tbu}{\underaccent{\tilde}{\bu}}
\newcommand{\bv}{\boldsymbol{v}}
\newcommand{\tv}{\underaccent{\tilde}{v}}
\newcommand{\tbv}{\underaccent{\tilde}{\bv}}
\newcommand{\bCt}{\mathcal{C}}  %
\newcommand{\bCi}{\bCt^{-1}}
\newcommand{\bCO}{\bCt_1}
\newcommand{\bCOi}{\bCO^{-1}}
\newcommand{\bCT}{\bCt_2}
\newcommand{\bCTi}{\bCT^{-1}}
\newcommand{\rE}{\mathrm{E}}
\newcommand{\bK}{\boldsymbol{K}}
\newcommand{\tK}{\underaccent{\tilde}{K}}
\newcommand{\tbK}{\underaccent{\tilde}{\bK}}
\newcommand{\bM}{\boldsymbol{M}}
\newcommand{\tM}{\underaccent{\tilde}{M}}
\newcommand{\tbM}{\underaccent{\tilde}{\bM}}
\newcommand{\bN}{\boldsymbol{N}}
\newcommand{\tN}{\underaccent{\tilde}{N}}
\newcommand{\tbN}{\underaccent{\tilde}{\bN}}
\newcommand{\tS}{{\underaccent{\tilde}{S}}}
\newcommand{\tT}{\underaccent{\tilde}{T}}
\newcommand{\bkap}{\boldsymbol{\kappa}}
\newcommand{\tbkap}{\underaccent{\tilde}{\bkap}}
\newcommand{\bkaph}{\boldsymbol{\kappa}_h}
\newcommand{\tbkaph}{\underaccent{\tilde}{\bkap}_h}
\newcommand{\btau}{\boldsymbol{\tau}}
\newcommand{\tbtau}{\underaccent{\tilde}{\btau}}
\newcommand{\btauh}{\boldsymbol{\tau}_h}
\newcommand{\tbtauh}{\underaccent{\tilde}{\btau}_h}
\newcommand{\Tau}{\mathcal{T}_h}
\newcommand{\tTau}{{\underaccent{\tilde}{\mathcal{T}}_h }}
\newcommand{\VLt}{V}     
\newcommand{\VH}{H}   
\newcommand{\VWt}{W}     
\newcommand{\VsL}{ V_{\mathrm{s}}}  
\newcommand{\Vs}{\Sigma} 
\newcommand{\VsH}{H_{\mathrm{s}}}
\newcommand{\VsC}{C_{\mathrm{s}}}
\newcommand*{\rom}[1]{\expandafter\@slowromancap\romannumeral #1@}
\newcommand{\RomO}{{I_1}}
\newcommand{\RomT}{{I_2}}
\newcommand{\RomR}{{I_3}}
\newcommand{\RomF}{{I_4}}
\newcommand{\tRomO}{\underaccent{\tilde}{\RomO}}
\newcommand{\tRomT}{\underaccent{\tilde}{\RomT}}
\newcommand{\tRomR}{\underaccent{\tilde}{\RomR}}
\newcommand{\tRomF}{\underaccent{\tilde}{\RomF}}
\begin{document}
	\title{Mixed Variational Formulation of Coupled Plates} 
	
	\author {Jun Hu}
	\address{LMAM and School of Mathematical Sciences, Peking University,
		Beijing 100871, P. R. China.\\ Chongqing Research Institute of Big Data, Peking University, Chongqing 401332, P. R. China. hujun@math.pku.edu.cn}
	
	\author {Zhen Liu}
	\address{LMAM and School of Mathematical Sciences, Peking University,
		Beijing 100871, P. R. China.\\ Chongqing Research Institute of Big Data, Peking University, Chongqing 401332, P. R. China.  zliu37@pku.edu.cn }

	\author{Rui Ma}
	\address{Beijing Institute of Technology,
		Beijing 100081, P. R. China. rui.ma@bit.edu.cn}
	
	\author{Ruishu Wang}
	\address{School of Mathematics,
		Jilin University, Changchun,
		Changchun, Jilin 130012, China.
		wangrs\_math@jlu.edu.cn.}
	
	\thanks{The first author was supported by NSFC
		project 12288101. The third author was supported by NSFC project 12301466.}
	
	

	\maketitle
	
	\begin{abstract}
		This paper proposes a mixed variational formulation for the problem of two coupled plates with a rigid {junction}. 
		The proposed mixed {formulation} introduces {the union of} stresses and moments as {an auxiliary variable}, which {are} commonly of great interest in practical applications. 
		The primary challenge lies in determining a suitable {space involving} both boundary and junction conditions of the auxiliary variable. 
		The {theory} of densely defined operators in Hilbert spaces is employed to define {a nonstandard Sobolev space} without the use of trace operators. 
		The well-posedness is established for the mixed formulation. 
		Besides, continuity conditions for stresses and moments under certain regularity assumptions are presented. 
		Based on these conditions, this paper provides a framework {of} conforming {mixed} finite element methods. {Numerical experiments are given to validate the theoretical results.} \\
		\textbf{Keywords}:  Coupled Plates, Mixed Formulation, Plane Elasticity, Kirchhoff Plate 
	\end{abstract}


	\section{Introduction}
	\label{intro}
	
	Elastic multi-structures are multidimensional assemblies {composed of} bodies, plates, and rods {through} suitable junctions.
	These structures are extensively utilized in automotive and aerospace engineering, see, e.g., \cite{timoshenko1959theory, Structuralofshells1982}.
	Based on the principle of minimum potential energy,  Feng and Shi \cite{Fengshi1996} proposed the mathematical theory of multi-structures, involving displacements as fundamental variables. 
	Huang, Shi, and Xu \cite{jianguo2005some} derived the mathematical model in vector forms and proved a generalized Korn’s inequality for elastic multi-structures consisting of bodies, plates, and rods.
	There are some other studies which {{employ}} asymptotic analysis to investigate junctions and equations of multi-structures, see, e.g., \cite{ciarlet1990new, Ciarlet1989JMPA,kozlov1999asymptotic}.
	
	
	This paper considers a specific multi-structure: two coupled plates with a rigid junction.
	The elastic plate model consists of a plane elasticity model in the longitudinal direction 
	and a Kirchhoff plate model in the transverse direction.
	Mathematical models of two coupled plates with rigid and elastic junctions were considered in \cite{bernadou1989numerical,lie1992mathematical,lie1993mathematical, Ciarlet1997, KolpaAndriaMark_ZZAMM_2015}.
{	Most of those results focus on formulating the two coupled plates model in terms of displacements.}
	{In \cite{RafetZulehCMAME2019}, a mixed formulation {which can preserve the continuity of bending moments with the normal direction} on standard $H^1$ spaces for Kirchhoff shells was proposed.}
	By introducing the union of stresses and moments as an auxiliary variable, this paper establishes a {new} mixed variational formulation for two coupled plates with a rigid junction. 
	There are several reasons behind considering {the new mixed formulation.}
	Firstly, stresses and moments are often of higher interest in applications. 
	The mixed formulation allows for the direct calculation of these variables. 
	Secondly, based on the prior {researches} \cite{bernadou1989numerical,lie1992mathematical,lie1993mathematical}, 
	it is known that the junction conditions contain {certain} continuity {requirements} of stresses and moments. 
	The mixed formulation considered in this paper {precisely} preserves this continuity.
	
{{For the plane elasticity model of a single plate, }the Hellinger-Reissner variational principle has been widely used which seeks stresses in $H(\divt,\mathbb{S})$ \cite{ArnoldWinther2002}. Recently, a mixed formulation which seeks moments in $H(\dD,\mathbb{S})$ for the Kirchhoff plate model has been proposed in \cite{Paulydividv, ArnoldFoundations}.
	The mixed variational formulation of two coupled plates is not a straightforward generalization of a single plate.
	The main challenge lies in providing an appropriate {{Sobolev space}} for the union of stresses and moments in the mixed formulation.
	The {junction} conditions for stresses and moments need to be included in the definition of the nonstandard Sobolev space, except for the clamped and free boundary conditions.
	By extending the idea presented in \cite{Walter2018}, this issue is indirectly addressed by employing the theory of densely defined operators within Hilbert spaces. 
	This approach avoids the technical complexities of directly using trace operators in nonstandard Sobolev spaces.
	The well-posedness of the mixed variational problem is proven by utilizing the equivalence to {the variational problem based on displacements} \cite{bernadou1989numerical,lie1992mathematical}. 
	Moreover, continuity conditions {of} stresses and moments are provided under certain regularity assumptions, 
	which are consistent with \cite{bernadou1989numerical,jianguo2005some}. 

	This paper provides a framework of conforming mixed finite element methods based on the continuity conditions of stresses and moments. {Under this framework, various finite elements can be chosen. 
	For finite elements of $H(\divt,\mathbb{S})$, see \cite{HuZhang2014, ArnoldWinther2002} on triangular meshes and \cite{arnold2005rectangular, hurect2015, chen2011conforming } on rectangular meshes. 
	For finite elements of $H(\dD,\mathbb{S})$, see \cite{HuMaZhang2021, YeZhangRAM2022, chen2023new} on triangular meshes and \cite{fuhrer2023mixed}  on rectangular meshes. 	Mixed finite elements in higher dimensions can be found in \cite{Hu2015, HuZhang2015,HuZhang2016,hurectany2014,hurect2015,adams2004mixed, arnold2008finite,awanou2012two} of $H(\divt, \mathbb{S})$ and in \cite{HuMaZhang2021,ChenHuang2022,hu2022new,chen2023new} of $H(\dD, \mathbb{S})$.}  }
To easily implement the continuity conditions,  
this paper uses the $H(\text{div}, \mathbb{S})$ element \cite{HuZhang2014} and the $H(\text{divDiv}, \mathbb{S})$ element \cite{ChenHuang2022} to approximate stresses and moments, respectively.
	Numerical experiments are provided to validate the theoretical results.
	Interested readers can refer to \cite{huang2011NMPDE, HuangGuoShi_CMA_2007,huang2006finite,bernadou1989numerical,lie1992mathematical,lie1993mathematical, LaiHuang_DCDSSB_2009,LaiHuangShi2010,jianguo2005some} for finite element methods of the variational formulation based on displacements.

	The contents of this paper can be outlined as follows. Section 2 recalls the plane {elasticity} model and the {Kirchhoff bending} model. The variational formulations of two coupled plates based on displacements and its well-posedness {are} presented. Section 3 proposes the mixed variational formulation of two coupled plates with the nonstandard Sobolev space. The well-posedness of the mixed formulation is provided. Section 4 gives a framework of {conforming mixed finite element methods}. Numerical experiments are performed to validate the theory.

	
	\section{Preliminaries}\label{Pre}

	This section presents the model assumptions and notation conventions in this paper. By considering the deformation of a single plate, common symbols in linear elasticity are introduced. Following that, the mathematical descriptions for the deformation of two coupled plates are given. In the end, the displacement-based variational {formulation of two coupled plates model} with {a} rigid junction is provided.
	
	%
	
	\subsection{Hypotheses and notations}	
	\label{hypotheses}
	Let $H^{m}(D;X)$ denote the Sobolev space of functions within domain $D$, taking values in space $X$, of $L^{2}(D; X)$ whose distributional derivatives up to the order $m$ also belong to $L^{2}(D; X)$. Similarly, let $C^m(D; X)$ denote the space of $m$-times continuously differentiable functions and $P_{l}(D; X)$ denote the set of all the polynomials with the total degree no greater than $l$, taking values in $X$. In this paper, $X$ could be $\mathbb{R}, \mathbb{R}^{2}$, $\mathbb{M}$ or $\mathbb{S}$, where $\mathbb{M}$ denotes the set of all $\mathbb{R}^{2\times 2}$ matrices while $\mathbb{S}$ denotes the set of all symmetric $\mathbb{R}^{2\times 2}$ matrices. If $X=\mathbb{R}$, then $H^{m}(D)$ abbreviates $H^{m}(D; X)$, similarly for $C^m(D)$. The standard Sobolev norm $\|\bullet\|_{m,D}$ will be taken. When $m=0, H^0(D;X)$ is exactly $L^2(D;X)$. The $L^2$ inner product is denoted by $(\bullet, \bullet)_D$ for the scalar, vector-valued, and tensor-valued $L^2$ spaces over $D$. In particular, on the tensor-valued $L^2$ space,
	$$
	(\boldsymbol{\sigma}, \boldsymbol{\tau})_D \coloneqq \int_D \boldsymbol{\sigma}: \boldsymbol{\tau} \mathrm{d} x=\int_D \sum_{i, j=1}^2 \sigma_{i j} \tau_{i j} \mathrm{d} x, \quad \forall \boldsymbol{\sigma}, \boldsymbol{\tau} \in L^2(D;\mathbb{M}).
	$$
	{Let $\langle \bullet , \bullet \rangle_{U^{*} \times U}$ denote the duality product in $U^{*} \times U$ for a Hilbert space $U$ with its dual $U^{*}$.} For scalar functions $v$, vector-valued functions $\boldsymbol{\psi}$, and matrix-valued functions $\boldsymbol{N}$ with all components in $H^1(\Omega)$, the first-order differential expressions
	$
	\nabla v, \nabla \boldsymbol{\psi}, \nabs \boldsymbol{\psi}, \operatorname{div} \boldsymbol{\psi}
	$
	and $\operatorname{Div} \boldsymbol{N}$
	are defined as follows
	\begin{equation}
		\label{operator}
		\begin{aligned}
			&\nabla v  \coloneqq\left(\begin{array}{l}
				\partial_1 v \\
				\partial_2 v
			\end{array}\right),  ~
			\nabla \boldsymbol{\psi}  \coloneqq\left(\begin{array}{ll}
				\partial_1 \psi_1 & \partial_2 \psi_1 \\
				\partial_1 \psi_2 & \partial_2 \psi_2
			\end{array}\right),  
			~ \nabs \boldsymbol{\psi}  \coloneqq \frac12( \nabla \boldsymbol{\psi} +\nabla \boldsymbol{\psi} ^{T}),
			\\
			&\operatorname{div} \boldsymbol{\psi}  \coloneqq \partial_1 \psi_1+\partial_2 \psi_2, 
			~ \operatorname{Div} \boldsymbol{N}  \coloneqq \left(\begin{array}{l}
				\partial_1 {N}_{11}+\partial_2 {N}_{12} \\
				\partial_1 {N}_{21}+\partial_2 {N}_{22}
			\end{array}\right).
		\end{aligned}
	\end{equation}
	Define the following nonstandard Sobolev spaces
	$$
	\begin{aligned}
		H(\divt, D; \mathbb{R}^2) &\coloneqq \{\boldsymbol{\psi} \in        L^2(D;\mathbb{R}^2) : \; \operatorname{div} \boldsymbol{\psi}  \in L^2(D)\},\\
		H(\Div, D; \mathbb{S}) &\coloneqq \{\bN \in L^2(D; \mathbb{S}) : \; \operatorname{Div} \boldsymbol{N} \in L^2(D;\mathbb{R}^2)\}, \\
		H(\dD, D; \mathbb{S}) &\coloneqq \{\bN \in L^2(D; \mathbb{S}) : \; \dD \boldsymbol{N} \in L^2(D)\}.
	\end{aligned}
	$$
	{The} differential operators in \eqref{operator} {hold} in the weak sense on the corresponding spaces $H^1(D), H^1(D;\mathbb{R}^2),H^1(D;\mathbb{R}^2), H(\divt, D;\mathbb{R}^2), H(\Div, D; \mathbb{S})$.
%
	
	Let $\Omega \subset \mathbb{R}^3$ be a thin plate whose midsurface is denoted as $S$ and lies in the $x$-$y$ plane. For simplicity, assume that $S$ is clamped on $\partial S_0 \subset \partial S$, with nonzero measure, and is free on the complementary part which is denoted as $\partial S_1$, i.e., $\partial S = \partial S_0 \cup \partial S_1,  \partial S_0 \cap \partial S_1 = \varnothing$. The {load} in $S$ is assumed to be a distributed load $(\boldsymbol{f},f_{3})\coloneqq (f_1,f_2,f_3)$. Note that problems with other boundary conditions can be treated similarly. In this paper, the deformations are assumed to be small and governed by the equations of linear elasticity, and the material is supposed to be homogeneous and isotropic. 
	Based on those assumptions, the deformation of $S$ applies to the superposition principle \cite{Ciarlet1997}. 
	This means that the external force can be decomposed into in-plane and out-of-plane components as shown in Figure \ref{decomposition}. 
	
	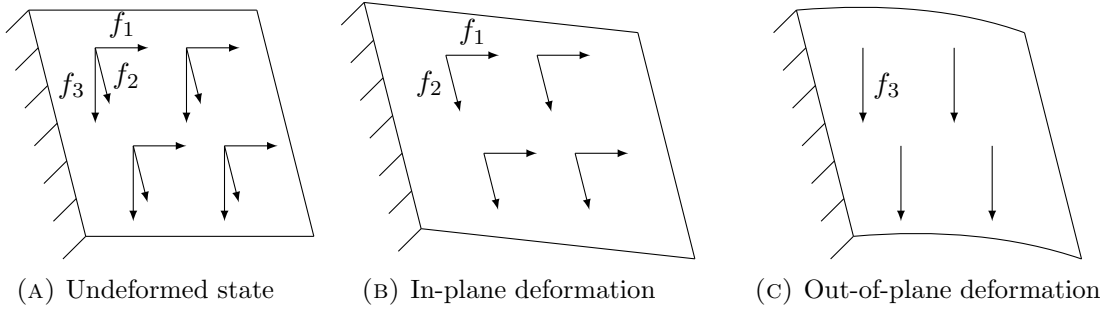
\begin{figure}[ht]
		\centering
		\begin{subfigure}[b]{0.25\textwidth}
			\centering
			\begin{tikzpicture}[>=latex]
				\draw (-0.875,3.5) -- (-0.125,0.5);
				\draw (-0.125,0.5) -- (2.875,0.5);
				\draw (2.875,0.5) -- (2.125,3.5);
				\draw (2.125,3.5) -- (-0.875,3.5);
				\foreach \x/\y in {-0.125/0.5, -0.25/1, -0.375/1.5, -0.5/2,-0.625/2.5, -0.75/3,-0.875/3.5} {
					\draw (\x, \y) -- ++(-0.3,-0.3); 
				}
				\foreach \x/\y in {0/3} {
					\draw[->] (\x, \y) -- ++(0.7,0) node[midway, above]{$f_1$};
					\draw[->] (\x, \y) -- ++(0.1875,-0.75) node[midway,right]{$f_2$};
					\draw[->] (\x, \y) -- ++(0,-1) node[midway, left]{$f_3$}; }
				\foreach \x/\y in {1.2/3,0.5/1.7,1.7/1.7} {
					\draw[->] (\x, \y) -- ++(0.7,0) ;
					\draw[->] (\x, \y) -- ++(0.1875,-0.75) ;
					\draw[->] (\x, \y) -- ++(0,-1); }
			\end{tikzpicture}
			\caption{Undeformed state}
		\end{subfigure}
		\hfill
		\begin{subfigure}[b]{0.3\textwidth}
			\centering
			\begin{tikzpicture}[>=latex]
				\draw (-0.875,3.5) -- (-0.125,0.5);
				\draw (-0.125,0.5) -- (3.475,0.1);
				\draw (3.475,0.1) -- (2.725,3.1);
				\draw (2.725,3.1) -- (-0.875,3.5);
				\foreach \x/\y in {-0.125/0.5, -0.25/1, -0.375/1.5, -0.5/2,-0.625/2.5, -0.75/3,-0.875/3.5} {
					\draw (\x, \y) -- ++(-0.3,-0.3); }
				\foreach \x/\y in {0.2/2.8} {
					\draw[->] (\x, \y) -- ++(0.7,0) node[midway, above]{$f_1$};
					\draw[->] (\x, \y) -- ++(0.1875,-0.75) node[midway,left]{$f_2$};}
				\foreach \x/\y in {1.4/2.8,0.7/1.5,1.9/1.5} {
					\draw[->] (\x, \y) -- ++(0.7,0) ;
					\draw[->] (\x, \y) -- ++(0.1875,-0.75) ; }
			\end{tikzpicture}
			\caption{In-plane deformation}
		\end{subfigure}
		\hfill
		\begin{subfigure}[b]{0.35\textwidth}
			\centering
			\begin{tikzpicture}[>=latex]
				\draw (-0.875,3.5) -- (-0.125,0.5);
				\draw (-0.125,0.5) .. controls (1,0.6) and (2,0.5) ..(2.875,0.2);
				\draw (2.875,0.2) -- (2.125,3.2);
				\draw (-0.875,3.5)  .. controls (0.3,3.6) and (1.3,3.5) .. (2.125,3.2);
				\foreach \x/\y in {-0.125/0.5, -0.25/1, -0.375/1.5, -0.5/2,-0.625/2.5, -0.75/3,-0.875/3.5} {
					\draw (\x, \y) -- ++(-0.3,-0.3); }
				\foreach \x/\y in {0/3} {
					\draw[->] (\x, \y) -- ++(0,-1) node[midway, right]{$f_3$};}
				\foreach \x/\y in {1.2/3,0.5/1.7,1.7/1.7} {
					\draw[->] (\x, \y) -- ++(0,-1); }
			\end{tikzpicture}
			\caption{Out-of-plane deformation}
		\end{subfigure}
		\caption{ (A) illustrates the shape and force distribution under the underformed state;  {(B) and (C)} depict the deformed shapes when subjected solely to in-plane and out-of-plane loads, respectively.}
		\label{decomposition}
	\end{figure}

	
	
	For the in-plane part, the equilibrium equations with boundary conditions can be written as follows
	\begin{equation}
		\begin{aligned}
			-\Div \bN &=\bft, \quad &  & \text{in}~S,\\
			\bu & = 0,  \quad &   &\text{on} ~\partial S_0, \\
			\bN  \bn & = 0, \quad&  & \text{on} ~ \partial S_1,
		\end{aligned}
	\end{equation}
	where $\bn$ is the unit outer {normal} vector of $\partial S$, $\bu = (u_1, u_2)^{T}$ is the displacement vector, and $\bN \in \mathbb{S}$ {is the stress tensor}. It follows from the linear elasticity assumptions that 
	$$
	\begin{aligned}
		\be &= \nabs \bu  = \frac12( \nabla \bu +(\nabla \bu) ^{T}), \\
		\bN &= \bCO \be \coloneqq \frac{Ee}{1-\nu^2}((1-\nu)\be +\nu \text{tr}(\be) \mathbf{I}),
	\end{aligned}
	$$
	{where $\be \in \mathbb{S}$ is the strain tensor, $\mathbf{I}$ is the identity tensor,  $\bCO$ and $\operatorname{tr}(\bullet)$ are the linear compliance tensor and the trace operator, {respectively}. Here $e, E$, and $\nu$ denote the thickness of the plate $\Omega$, the Young's modulus, and the} Poisson {ratio} of the material, respectively.
	
	For the out-of-plane part, taking the Kirchhoff assumptions \cite{bernadou1989numerical}, the deformation is governed by the following equilibrium equation with boundary conditions
	\begin{equation}
		\begin{aligned}
			-\dD \bM &= f_3,\quad &  & \text{in}~S, \\
			u_3= 0,\quad \partial_{n}u_{3}&=0, \quad &&\text{on}~\partial S_0, \\
			T=0, \quad M_{nn}&=0, \quad&& \text{on}~\partial S_1,\\
			\llbracket M_{nt} \rrbracket_x&=0, & & \text{at} ~x \in \mathcal{V}_{ \partial S_1 } {,}
		\end{aligned}
	\end{equation}
	with the deflection $u_{3}$ and {the moment} tensor $\bM \in \mathbb{S}$. The behaviour law  of $S$ is described by
	$$
	\begin{aligned}
		\bK  &= \bK(u_3) \coloneqq -\nabla^2 u_3,\\
		\bM &= \bCT \bK \coloneqq \frac{Ee^3}{12(1-\nu^2)}((1-\nu) \bK+ \nu\text{tr}(\bK )\mathbf{I}),
	\end{aligned}
	$$
	where $\bK \in \mathbb{S}$ is the curvature tensor. Let $\bt$ denote the unit tangential vector of $\partial S$, $\partial_{t}(\bullet) \coloneqq \nabla(\bullet) \cdot \bt$ and $\partial_{n}(\bullet) \coloneqq \nabla(\bullet) \cdot \bn$ be the tangential and normal derivatives, respectively. Define the Kirchhoff shear force (cf. \cite{Fengshi1996}), the normal and twisting moments by
	\begin{equation}
		\label{defT}
		T \coloneqq (\Div \bM)\cdot \bn+\partial_t((\bM \bn)\cdot\bt),\quad M_{nn} \coloneqq (\bM \bn)\cdot\bn,\quad M_{nt}\coloneqq(\bM \bn)\cdot\bt.
	\end{equation}
	{The set $\mathcal{V}_{\partial S_1}$ contains all points $x$ where {exist} two adjacent edges $e_1,e_2 \in \partial S_1$ {satisfying} $e_1 \cap e_2 = x$. These edges do not meet an angle of $\pi$ and possess different tangential and normal vectors $\bt_1,\bn_1$ and $\bt_2,\bn_2$ respectively. The jump term at $x$ is defined by }
	\begin{equation}
		\label{jump}
		\llbracket M_{nt} \rrbracket_x \coloneqq (\bM \bn_1)\cdot\bt_1(x)-(\bM \bn_2)\cdot\bt_2(x), \quad \text{for}~x \in \mathcal{V}_{ \partial S_1 }.
	\end{equation}

	\subsection{Two coupled plates model}
	{This subsection presents} the mathematical description of two coupled plates. Throughout this paper, let $(\underaccent{\tilde}{\bullet})$ denote the quantities related to the plate $\underaccent{\tilde}{\Omega}$ with the same physical meaning as {those} in $\Omega$ without exception. For example, $(\tbf,\tf_3)$ is the distributed load in $\tS$. 	Without loss of generailty, assume that the material parameters $\underaccent{\tilde}{e}, \underaccent{\tilde}{E}, \underaccent{\tilde}{\nu}$ of the plate $\underaccent{\tilde}{\Omega}$ take the same value as $e, E, \nu$ of the plate $\Omega$ for the sake of narrative. 
	
	
	Let $\Omega$ {be} coupled with $\underaccent{\tilde}{\Omega}$ on part of its boundary $\partial \Omega$. Figure \ref{FigJunction} depicts the distinct middle surfaces $S$ and $\tS$ coupled along a common rectilinear junction $\Gamma \coloneqq \partial {S} \cap \partial {\tS}$. Let $\partial S_1$ and $\partial \tS_1$ be the free boundary of the middle surfaces $S$ and $\tS$, respectively. Then it holds that $\partial S = \partial S_0 \cup \partial S_1 \cup \Gamma, \partial \tS = \partial \tS_1 \cup \Gamma$.
	{Let $\bl = \bn \times \bt$}. It follows that $(\bn, \bt, \bl)$ defined on $\Gamma \subset \partial S$ constitutes a local direct orthonormal reference system of $S$. By taking the opposite direction $\underaccent{\sim}{\bt} = -\bt$ on $\Gamma$, the local direct orthonormal reference system in $\tS$ {can} be defined by $(\tbn, \underaccent{\sim}{\bt}, \tbl)$ on $\Gamma \subset \partial \tS$. Define the angle $\theta$ between two plates shown in Figure \ref{FigJunction} by $\cos \theta = \bn \cdot \tbn$. It is easy to verify
	\begin{equation}
		\label{coord}
		\begin{aligned}
			& \tbn =\boldsymbol{n} \cos \theta - \boldsymbol{l} \sin \theta, \quad \underaccent{\sim}{\boldsymbol{t}}=-\boldsymbol{t}, \quad \tbl  = -\boldsymbol{n} \sin \theta - \boldsymbol{l}\cos \theta. \\
		\end{aligned}    
	\end{equation}
	When the plates are coplanar, the angle is equal to $\pi$, and when the plates coincide, the angle is equal to 0. This paper deals with the case $0 < \theta < \pi$.

	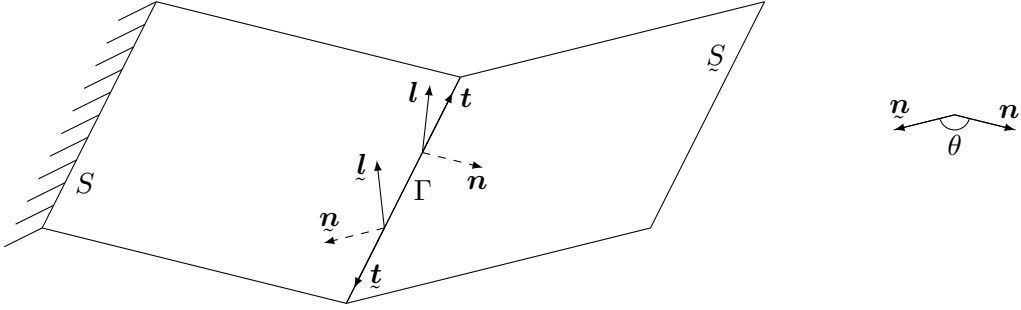
\begin{figure}[ht]
		\centering
		\begin{tikzpicture}[>=latex]
			\draw (0,0) -- (1.5,3)node[pos=0.5, right]{$\Gamma$}; 
			\draw (0,0) -- (4,1);  
			\draw (4,1) -- (5.5,4)  node[pos=0.75, left]{$\tS$};
			\draw (5.5,4) -- (1.5,3);
			\draw (0,0) -- (1.5,3);
			\draw (0,0) -- (-4,1);
			\draw (-4,1) -- (-2.5,4) node[pos=0.2, right]{$S$};
			\draw (-2.5,4) -- (1.5,3);
			\foreach \x in {1,1.3,...,4.3}{
				\pgfmathsetmacro{\resultA}{\x -1}
				\pgfmathsetmacro{\step}{\resultA /0.3}
				\pgfmathsetmacro{\steps}{\step*0.15}
				\pgfmathsetmacro{\resultB}{-4 + \steps}
				\draw (\resultB,\x) -- ++ (-0.5,-0.25);}
			\draw[dashed][->] (1.0, 2) -- ++(0.8,-0.2) node[pos=0.9, below]{$\bn$};
			\draw[->] (1.0,2) -- ++ (0.4,0.8) node[pos=0.9, right]{$\bt$};
			\draw[->] (1.0,2) -- ++ (0.1,0.9) node[pos=0.9, left]{$\bl$};
			
			\draw[dashed][->] (0.5, 1) -- ++(-0.8,-0.2) node[pos=0.9, above]{$\tbn$};
			\draw[->] (0.5,1)-- ++ (-0.4,-0.8) node[pos=0.8, right]{$\tbt$};
			\draw[->] (0.5,1) -- ++ (-0.1,0.9) node[pos=0.9, left]{$\tbl$};
			
			\draw[->] (8, 2.5) -- ++(0.8,-0.2) node[pos=0.9, above]{$\bn$};
			\draw[->] (8, 2.5) -- ++(-0.8,-0.2) node[pos=0.9, above]{$\tbn$};
			\draw (7.2,2.3) coordinate (A)
			-- (8,2.5) coordinate (B)
			-- (8.8,2.3) coordinate (C)
			pic [draw, "$\theta$",angle eccentricity=2, angle radius=0.2cm] {angle};
		\end{tikzpicture}
		\caption{The local coordinate systems of two coupled plates.}
		\label{FigJunction}
	\end{figure}
	


	
	
	Let $\phi \coloneqq (\bu, u_3; \tbu, \tu_3)$ and $F \coloneqq (\bft,f_3, \tbf, \tf_3)$ denote the displacement and external distribute force of the middle surfaces $S\cup \tS$, respectively. Assume that two plates are coupled with a rigid junction. Then the equilibrium equations of two coupled plates,  {see \cite{bernadou1989numerical,lie1992mathematical} for instance}, can be written as follows
	\begin{equation}
		\label{equilibrium}
		\begin{aligned}
			\Div \bN +\bft &=0,&  \dD \bM + f_3&=0 & \text { in } S, \\
			\tDiv \tbN +\tbf&=0, & \tdD \tbM + \tf_3 &=0 & \text { in } \tS,
		\end{aligned}    
	\end{equation}
	with boundary conditions
	\begin{subequations}
		\label{bd}
		\begin{align}
			\bu&=\boldsymbol{0},&   u_{3}&= 0,& u_{3, n}&=0,&  &\text { on } \partial S_0,& \label{bd1}  \\
			\bN  \bn &=\boldsymbol{0},&  T&=0,&  M_{nn} &=0,& & \text { on } \partial S_1,&  \llbracket M_{nt} \rrbracket_x &=0,&  & \forall~x \in \mathcal{V}_{ \partial S_1 },& \label{bd2} \\
			\tbN \tbn &= \boldsymbol{0},&  \tT &=0,&  \tM_{\tn \tn} &=0,&   & \text { on } \partial \tS_1, &\llbracket \tM_{\tn \ttt} \rrbracket_x&= 0,&  & \forall ~x \in \mathcal{V}_{ \partial \tS_1 },& \label{bd3}
		\end{align}    
	\end{subequations}
	the junction conditions
	\begin{subequations}
		\label{jun}
		\begin{align}
			&& u_{3,n}&=-\tu_{3,\tn},&&& &\text {on}~ \Gamma, \label{jun1}\\
			u_{1} &= \tu_{1} \cos \theta - \tu_{3} \sin \theta,&  u_{2} &= -\tu_{2}, & u_{3} &=  - \tu_{1}\sin \theta - \tu_{3}\cos \theta,&&\text {on}~\Gamma, \label{jun2} \\
			&&\tM_{\tn \tn}&= M_{nn},&&& &\text {on}~\Gamma, \label{jun3}\\
			\tN_{\tn\tn} &=  -N_{nn}  \cos \theta + T \sin \theta ,& 
			\tN_{\tn\ttt}&= N_{nt},&  \tT&=  N_{nn}  \sin \theta + T \cos \theta,&&\text {on}~\Gamma ,\label{jun4}
		\end{align}    
	\end{subequations}
	and the corner conditions
	\begin{equation}
		\label{cornercondition}
		\llbracket M_{n t}\rrbracket_x \sin \theta (x)=0, \quad \llbracket\tM_{\tn \ttt} \rrbracket_x - \llbracket M_{n t}\rrbracket_x \cos \theta(x) = 0, \quad \forall ~x \in \mathcal{E}_{\Gamma}.
	\end{equation}
	Here $\mathcal{E}_{\Gamma}$ {denotes the collection of two end points} of $\Gamma$. For $ x \in \mathcal{E}_{\Gamma}$, the jump terms $\llbracket M_{n t}\rrbracket_x$ and $\llbracket\tM_{\tn \ttt} \rrbracket_x$  
	are defined in a similar way as in $\eqref{jump}$ while the two adjacent edges $e_1\cap e_2 = x$ are not in $\Gamma$ simultaneously. {In this case, }the sign in $\eqref{jump}$ depends on the local coordinate systems of the quantity in the bracket. 
	
	
	
	
	\subsection{Variational formulation based on displacements}
	This subsection recalls some results from \cite{bernadou1989numerical, lie1992mathematical}. Introduce the {following} Hilbert spaces:
	\begin{equation}
		\label{VLt}
		\begin{aligned}
		\VLt &\coloneqq \left\{ \psi = \left( \bv, v_3 ; \tbv, \tv_3 \right) : \; \left( \bv, v_3 \right) \right.  \in  L^2(S;\mathbb{R}^2) \times L^2(S), \left( \tbv, \tv_3 \right)  \left.\in L^2\left(\tS;\mathbb{R}^2\right) \times L^2\left(\tS\right)\right\},\\
		\VH &\coloneqq \left\{ \psi = \left( \bv, v_3 ; \tbv, \tv_3 \right) : \; \left( \bv, v_3 \right) \right.  \in H^1(S;\mathbb{R}^2)  \times H^2(S), 
		\left( \tbv, \tv_3 \right) \left.\in H^1\left(\tS;\mathbb{R}^2\right)  \times H^2\left(\tS\right)\right\},
	\end{aligned}
	\end{equation}
	with norms
	\begin{align}
		\|\psi\|_{\VLt} &\coloneqq \left(  \left\|\bv \right\|_{0, S}^2+\left\|v_3\right\|_{0, S}^2+ \left\|\tbv\right\|_{0, \tS}^2+\left\| \tv_3\right\|_{0, \tS}^2 \right)^{\frac12}, \label{normV}\\
		\|\psi\|_{ \VH } &\coloneqq \left( \left\| \bv \right\|_{1, S}^2+\left\|v_3\right\|_{2, S}^2+  \left\| \tbv \right\|_{1, \tS}^2+\left\| \tv_3\right\|_{2, \tS}^2 \right)^{\frac12} {.}  \label{normH}
	\end{align}
	Let $(\bullet, \bullet)_{V}$ denote the $L^2$ inner product in $\VLt$ corresponding to the norm $\| \bullet \|_{V}$. Define 
	\begin{equation}
		\label{W}
		\VWt \coloneqq \left\{\psi=\left( \bv, v_3; \tbv, \tv_3 \right)  \in \VH : \; \psi~\text{satisfies}~\eqref{bd1}, \eqref{jun1}, \text{and}~\eqref{jun2} \right\}.
	\end{equation}
	It is easy to verify that the subspace $\VWt$ is closed in $\VH$. For $\psi \in W$, let $\| \psi\|_{\VWt} = \| \psi \|_{\VH}.$
	
	Given $\phi, \psi \in \VWt$, define
	\begin{equation}
		\label{D}
		\begin{aligned}
			D(\phi, \psi)& \coloneqq \int_S \bN(\bu) : \be(\bv) \mathrm{d} S  +\int_S   \bM(u_3) : \bK(v_3)\mathrm{d} S \\
			& \quad \quad + \int_{\tS}  \tbN(\tbu) : \tbe(\tbv) \mathrm{d} \tS +\int_{\tS} \tbM(\tu_{3}):\tbK(\tv_3) \mathrm{d} \tS.
		\end{aligned}    
	\end{equation}
	Then the variational formulation for two coupled plates with an angle $\theta$ reads as follows.
	\begin{pro} \label{classicalvar}
		{	Given $F \in \VLt,$  find $\phi \in \VWt$ such that 
			$$D(\phi, \psi)=( F,\psi)_{\VLt},\quad  \forall ~\psi \in \VWt.$$}
	\end{pro}
	\begin{lemma}[Well-posedness{\cite[Lemma 2.2]{lie1992mathematical}}]
		\label{coercive}
		{The bilinear form $D(\bullet, \bullet)$ defined in \eqref{D} is continuous and coercive on $\VWt \times \VWt$.} {Moreover, the solution $\phi$ depends continuously on $F$ with a constant $c$, namely, }
		\begin{equation}
			\label{stability}
			\| \phi \|_{\VWt} \leq c \| F\|_{\VLt}.
		\end{equation}
	\end{lemma}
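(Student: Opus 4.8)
The plan is to apply the Lax--Milgram theorem on the Hilbert space $\VWt$ endowed with the norm $\|\bullet\|_{\VH}$, so the task reduces to checking continuity and coercivity of $D(\bullet,\bullet)$. Continuity is routine: under the physical constraints on $E$ and $\nu$ the material tensors $\bCO$ and $\bCT$ are bounded and symmetric positive definite on $\mathbb{S}$, hence $\|\bN(\bv)\|_{0,S}\lesssim\|\nabs\bv\|_{0,S}\lesssim\|\bv\|_{1,S}$ and $\|\bM(v_3)\|_{0,S}\lesssim\|\nabla^2 v_3\|_{0,S}\lesssim\|v_3\|_{2,S}$, with the analogous bounds on $\tS$; Cauchy--Schwarz applied to each of the four terms of \eqref{D} then gives $|D(\phi,\psi)|\lesssim\|\phi\|_{\VWt}\|\psi\|_{\VWt}$. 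Since $\|\psi\|_{\VLt}\le\|\psi\|_{\VWt}$, the functional $\psi\mapsto(F,\psi)_{\VLt}$ is bounded on $\VWt$ as well.

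For coercivity I would first use positive definiteness of $\bCO,\bCT$ to obtain $D(\psi,\psi)\gtrsim|\psi|^2$, where $|\psi|^2:=\|\nabs\bv\|_{0,S}^2+\|\nabla^2 v_3\|_{0,S}^2+\|\nabs\tbv\|_{0,\tS}^2+\|\nabla^2\tv_3\|_{0,\tS}^2$, and then show that $|\bullet|$ is a norm on $\VWt$ equivalent to $\|\bullet\|_{\VH}$. For the latter I would run a Peetre--Tartar compactness argument: Korn's second inequality on $S$ and on $\tS$, together with the trivial bound $\|v_3\|_{2,S}\le\|\nabla^2 v_3\|_{0,S}+\|v_3\|_{1,S}$, yields $\|\psi\|_{\VH}\lesssim|\psi|+\|\psi\|_{\VLt}$; combined with the compact Rellich embedding $\VWt\hookrightarrow\hookrightarrow\VLt$, the equivalence follows once I verify that any $\psi\in\VWt$ with $|\psi|=0$ vanishes.

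The triviality of this kernel is the heart of the matter. If $|\psi|=0$ then $\nabs\bv=0$ and $\nabla^2 v_3=0$ on the connected domain $S$, so $\bv$ is an infinitesimal rigid in-plane motion and $v_3$ is affine on $S$; similarly $\tbv$ and $\tv_3$ on $\tS$. The clamped conditions \eqref{bd1} on $\partial S_0$ (of positive measure) annihilate a rigid in-plane motion, so $\bv\equiv 0$ on $S$, and $v_3=\partial_n v_3=0$ on $\partial S_0$ forces the affine $v_3\equiv 0$ on $S$. Restricting to $\Gamma$, condition \eqref{jun1} gives $\tv_{3,\tn}=-v_{3,n}=0$ on $\Gamma$, and \eqref{jun2} (with $v_1=v_2=v_3=0$ there) reduces to $\tv_2=0$ on $\Gamma$ and a $2\times 2$ linear system for $(\tv_1,\tv_3)$ whose coefficient matrix has determinant $-1$, hence $\tbv=0$ and $\tv_3=0$ on $\Gamma$. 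Then the rigid motion $\tbv$ vanishes on the curve $\Gamma$, so $\tbv\equiv 0$ on $\tS$, and the affine $\tv_3$ vanishes with its normal derivative on $\Gamma$, so $\tv_3\equiv 0$ on $\tS$. Thus $\psi=0$. (Alternatively, one can invoke directly the generalized Korn inequality for elastic multi-structures proved in \cite{jianguo2005some}.)

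With continuity and coercivity in hand, Lax--Milgram provides a unique solution $\phi\in\VWt$ of Problem~\ref{classicalvar}, and testing with $\psi=\phi$ gives $\alpha\|\phi\|_{\VWt}^2\le D(\phi,\phi)=(F,\phi)_{\VLt}\le\|F\|_{\VLt}\|\phi\|_{\VWt}$, i.e.\ \eqref{stability} with $c=1/\alpha$. I expect the coercivity to be the main obstacle --- equivalently, a Korn-type inequality for the coupled structure --- the delicate point being that the constraints tying $S$ to $\tS$ live only on the one-dimensional junction $\Gamma$, so one must check carefully (as above, or via \cite{jianguo2005some}) that they nonetheless remove the entire kernel of the energy seminorm.
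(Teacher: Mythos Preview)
Your argument is correct and follows the standard route to this kind of result: Lax--Milgram, continuity by Cauchy--Schwarz and boundedness of $\bCO,\bCT$, and coercivity via a Peetre--Tartar compactness argument resting on Korn's second inequality together with the triviality of the energy kernel. Your kernel analysis is clean and uses exactly the right ingredients: the clamped boundary $\partial S_0$ kills the rigid modes on $S$, and the junction constraints \eqref{jun1}--\eqref{jun2} (with determinant $-\cos^2\theta-\sin^2\theta=-1$) transmit enough information across the rectilinear $\Gamma$ to kill the rigid and affine modes on $\tS$.

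The paper itself gives no proof of this lemma; it is simply quoted from \cite[Lemma~2.2]{lie1992mathematical}, so there is no in-paper argument to compare against. One minor technical remark: the intermediate bound you wrote, $\|v_3\|_{2,S}\le\|\nabla^2 v_3\|_{0,S}+\|v_3\|_{1,S}$, leaves an $H^1$ term on the right, so the compact embedding you are implicitly invoking is $H^2\hookrightarrow\hookrightarrow H^1$ rather than $H^2\hookrightarrow\hookrightarrow L^2$. Either works for Peetre--Tartar, but if you want the embedding $\VWt\hookrightarrow\hookrightarrow\VLt$ exactly as you stated it, use Ehrling's lemma (or the equivalent interpolation estimate) to get $\|v_3\|_{2,S}\lesssim\|\nabla^2 v_3\|_{0,S}+\|v_3\|_{0,S}$ directly.
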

	

	
	\section{Mixed Variational Formulation}
	
	
	This section introduces a mixed formulation of two coupled plates. 
	{The} existence and uniqueness {of the solution} of the mixed variational formulation {is} established, {by showing the} equivalence to the variational {formulation based on displacements}. To derive mixed finite element methods for the mixed {formulation}, this section provides continuity conditions related to the newly introduced space.
	
	\subsection{Spaces and operators}
			This paper introduces
		$\Phi \coloneqq (\bN,\bM; \tbN, \tbM),$ 
		the union of the stresses and moments in \eqref{equilibrium}, as an auxiliary variable for the mixed formulation of two coupled plates.
		{To establish the mixed variational formulation, the main challenge lies in determining a suitable space for $\Phi$ that {incorporates} boundary and junction conditions. Given a stress tensor $\bN \in H(\divt, \mathbb{S})$ on $S$, its trace belongs to $H^{-1/2}(\partial S)$. However, since $H^{-1/2}(\partial S)$ is defined as a dual space, its elements cannot be restricted simply on the junction $\Gamma$ \cite[Section 2.5]{boffi2013mixed}. Similar considerations apply to functions in $H(\dD, \mathbb{S})$ \cite{fuhrer2019ultraweak}. Instead of directly tackling this issue, a new nonstandard Sobolev space is proposed through the application of {adjoint operator} theory. }
		
 	  Define an operator 
 \begin{equation}
 	\label{defB}
 	B \coloneqq (-\nabs, \nab^2; -\tnabs, \tnab^2), 
 \end{equation}
 on $\VWt$ in \eqref{W}. {The} first and the second differential operators are defined in the context of classical weak derivatives. To give {a precise} definition of the operator $(\Div, \dD;$ $\tDiv, \tdD)$ acting on $\Phi$, {employ methodology analogous to that presented in \cite{Walter2018}, which uses the theory of adjoint operators for unbounded operators}. {This approach helps in identifying an appropriate function space that satisfies boundary conditions in a weaker sense.}
	
	Recall {the following classical theory} for densely defined unbounded operators, see, e.g., \cite{boffi2013mixed}. 
	{Let $X$ and $Y$ be Hilbert spaces such that $W$ is dense in $X$, and $X$ and $Y$ will be specified later.} Let the operator $B$ act on $X$ to the dual space of $Y$ as  
	$
	B:   \VWt \subset X \longrightarrow Y^{*}.
	$
	{	Define the operator
		$B^*: D\left(B^*\right) \subset Y \rightarrow X^*,$
		where $D\left(B^*\right)$ is the domain of definition of $B^{*}$, as follows: $y \in D\left(B^*\right)$ if and only if $y \in Y$ and there exists a linear functional $G \in X^*$ such that
		$$
		\langle B x, y\rangle_{Y^{*}\times Y}=\langle G, x\rangle_{X^{*}\times X},\quad \text { for all } x \in \VWt.
		$$
		Define $B^* y=G$. }Note that $\left\langle B^* y, x\right\rangle$ is well-defined for $x \in X$ and $y \in D\left(B^*\right)$ and 
	$$
	\left\langle B^* y, x\right\rangle_{X^{*}\times X}=\langle B x, y\rangle_{Y^{*}\times Y}, \quad \text { for all } x \in \VWt, y \in D\left(B^*\right) .
	$$
	The domain $D\left(B^*\right)$ is a Hilbert space with the graph norm $$\|y\|_{D\left(B^*\right)}\coloneqq \left(\|y\|_Y^2+\left\|B^* y\right\|_{X^*}^2\right)^{\frac{1}{2}}.$$
	
	{Introduce the $L^2$ space}
	\begin{equation}
		\label{defVs}
		\begin{aligned}
		\VsL \coloneqq \left\{ \Psi = \left( \btau, \bkap ; \tbtau, \tbkap \right) : \;   \left( \btau, \bkap \right) \right.  \in L^2(S; \mathbb{S}) \times L^2(S; \mathbb{S}), 
		\left( \tbtau, \tbkap \right)  \left.\in L^2(\tS; \mathbb{S}) \times L^2(\tS; \mathbb{S})\right\},
	\end{aligned}
	\end{equation}
	with the norm 
	\begin{equation}
		\label{normVsym}
		\|\Psi\|_{\VsL} \coloneqq \left(  \left\| \btau \right\|_{0, S}^2+ \left\| \bkap\right\|_{0, S}^2+ \left\|\tbtau \right\|_{0, \tS}^2+\left\| \tbkap \right\|_{0, \tS}^2 \right)^{\frac12}.
	\end{equation}
	{Let $(\bullet, \bullet)_{\VsL}$ denote the $L^2$ inner product in $\VsL$ related to the norm $\| \bullet \|_{\VsL}$.} This paper considers the case $X = \VLt$ and $Y = \VsL$. Obviously, $\VWt \subset \VLt$ and $\VWt$ is dense in $\VLt$. The operator $B$ is then a densely defined {linear operator}. {It follows from \eqref{normV} and \eqref{normVsym} that} $\VLt^{*}=\VLt, \VsL^{*}=\VsL$ and $\langle \bullet, \bullet \rangle_{\VLt^*\times \VLt} = (\bullet,\bullet)_{\VLt}$, $\langle \bullet, \bullet \rangle_{\VsL^*\times \VsL} = (\bullet,\bullet)_{\VsL}$. Define a subspace of $\VsL$ as follows
	\begin{equation}
		\begin{aligned}
			\label{VHs}
			\VsH \coloneqq \{ \Psi = \left( \btau, \bkap ; \tbtau, \tbkap\right): \; & (\btau, \bkap)  \in H(\Div,S;\mathbb{S}) \times H(\dD,S;\mathbb{S}),  \\
			&(\tbtau, \tbkap)\in H(\tDiv,\tS;\mathbb{S}) \times H(\tdD,\tS; \mathbb{S}) \}.
		\end{aligned}
	\end{equation}
	
	According to the previous theory, {$\Phi \in D(B^*)$ if and only if $\Phi \in \VsL$, and there exists a linear functional $G \in \VLt$ such that}
	\begin{equation}
		\label{adj}
		( B \psi, \Phi )_{\VsL} = ( G, \psi )_{\VLt},  \quad \forall~\psi \in W.
	\end{equation}
	Note that $C_{0}^{\infty}(S;\mathbb{R}^2)\times C_{0}^{\infty}(S)\times C_{0}^{\infty}(\tS;\mathbb{R}^2)\times C_{0}^{\infty}(\tS)$ is contained in $W$. This, the definition of \eqref{VHs}, and \eqref{adj} show that $\Phi \in \VsH$ for any $\Phi \in D(B^*)$ and 
	\begin{equation}
		\label{defB*}
		B^{*}= (\Div, \dD; \tDiv, \tdD).
	\end{equation}
	Let $\Sigma \coloneqq D(B^*)$ be the space for the introduced auxiliary variable.
	Then the following equality holds
	\begin{equation}
		\label{adjointequality}
		( B^* \Phi, \psi )_{\VLt} =( B \psi, \Phi )_{\VsL}, \quad   \forall~\psi \in \VWt, \Phi \in \Sigma.
	\end{equation} 
	The nonstandard Sobolev space $\Vs$ can be explicitly given by
	\begin{equation}
		\label{sigma}
		\Vs = \left\{   \Phi \in \VsH : \;  \exists c>0, \text{ such that}\, | (  B \psi, \Phi)_{\VsL} |\leq c\|\psi\|_{\VLt}, \forall \psi\in W  \right\},
	\end{equation}
	equipped with the norm $\| \Phi \|_{\Vs} = \left( \| \Phi\|_{\VsL}^{2} + \| B^{*} \Phi\|_{\VLt}^{2} \right)^{1/2}$, {i.e.}, 
	$$\| \Phi \|_{\Vs} = \left( \| \Phi\|_{\VsL}^{2} + \| \Div \bN \|_{0}^{2} + \| \dD \bM \|_{0}^{2} + \| \tDiv \tbN \|_{0}^2 + \| \tdD \tbM \|_{0}^{2}   \right)^{1/2} .$$ 

	\subsection{Mixed formulation and its well-posedness}
	\label{32}
	
	The results in the previous subsection of {densely} defined operator $B$ and its {adjoint} $B^{*}$ together with their {definition} of domain motivate the new mixed formulation as follows. {Let $\bCt \coloneqq (\bCO , \bCT ; \bCO , \bCT ) $ be}
	$$ \bCt (\be, \bK; \tbe, \tbK )  \coloneqq   (\bCO \be, \bCT \bK; \bCO \tbe, \bCT \tbK ).$$ 
			
	\begin{pro}
		\label{var}
		Given $F \in \VLt$, find $(\Phi,\phi) \in \Vs \times \VLt$ such that
		\begin{equation*}
			\left\{  
			\begin{aligned}
				( \Phi, \Psi )_{\bCi} + ( B^{*}\Psi, \phi )_{V}  & =0, & &\forall~\Psi \in \Vs, & \\
				(B^{*}\Phi, \psi)_{V} & = - \left( F,\psi \right)_{V}, & &\forall~\psi \in \VLt,& \\
			\end{aligned}\right.
		\end{equation*}
		with
		$$
		( \Phi, \Psi )_{\bCi}  \coloneqq ( \bCOi \bN,\btau)_{S}
		+( \bCTi \bM,\bkap)_{S}
		+( \bCOi \tbN,\tbtau)_{\tS}
		+( \bCTi \tbM,\tbkap)_{\tS}.
		$$
	\end{pro}

	It is easy to verify that $( \Phi, \Psi )_{\bCi}$ is a symmetric, nonnegative bilinear form. Then it follows from Brezzi's theory (see, e.g., \cite{boffi2013mixed}) that the well-posedness of Problem $\ref{var}$ holds if and only if the following conditions are satisfied:
	\begin{enumerate}
		\item  $(\Phi, \Psi )_{\bCi}$ is bounded: There exists a constant $a>0$ such that
		$$
		|(\Phi, \Psi )_{\bCi}| \leq a \|\Phi\|_{\Vs}\|\Psi\|_{\Vs},  \quad \forall~\Phi, \Psi \in \Vs .
		$$
		\item  $(B^{*}\Psi, \psi)_{\VLt}$ is bounded: There exists a constant $b>0$ such that
		$$
		|(B^{*}\Psi, \psi)_{\VLt}| \leq b \|\Psi\|_{\Vs}\|\psi\|_\VLt,\quad  \forall~\Psi \in \Vs, \psi \in \VLt .
		$$
		\item $(\Phi, \Psi )_{\bCi}$ is coercive on the kernel of $B$: There exists a constant $\alpha>0$ such that
		$$
		( \Phi, \Psi )_{\bCi} \geq \alpha\|\Psi\|_{\Vs}^2, \quad \forall~\Psi \in \operatorname{Ker} B,
		$$
		with $$\operatorname{Ker} B=\{\Psi \in \Vs:\; (B^{*}\Psi, \psi)_{\VLt}=0 , \text{for all } \psi \in \VLt \}.$$
		\item $(B^{*}\Psi, \psi)_{\VLt}$ satisfies the inf-sup condition: There {exists} a constant $\beta>0$ such that
		$$
		\inf _{0 \neq \psi \in \VLt} \sup _{0 \neq \Psi \in \Vs } \frac{(B^{*}\Psi, \psi)_{\VLt}}{\| \Psi \|_{\Vs }\| \psi \|_\VLt } \geq \beta.
		$$
	\end{enumerate}
	These conditions are known as Brezzi's conditions with constants $a, b, \alpha$, and $\beta$. 
	
	{The following theorem shows that the solution to Problem 1 is also a solution to Problem 2. It can be utilized to prove the inf-sup condition of Problem 2.}
	
	\begin{theorem}
		\label{existence}
		Let $\phi \in \VWt$ be the solution of Problem \ref{classicalvar} for $F \in \VLt$. Then $\Phi = - \bCt B \phi \in \Vs$ and $(\Phi,\phi)$ solves Problem \ref{var}. 
	\end{theorem}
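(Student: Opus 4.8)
The plan is to verify that the pair $(\Phi,\phi)$ with $\Phi = -\bCt B\phi$ satisfies both equations of Problem \ref{var}, and, as a prerequisite, that $\Phi$ actually lies in the nonstandard space $\Vs$. First I would establish the membership $\Phi \in \Vs$. Since $\phi \in \VWt \subset \VH$, each block of $B\phi = (-\nabs\bu,\nab^2 u_3;-\tnabs\tbu,\tnab^2\tu_3)$ lies in $L^2$, so $\Phi \in \VsL$; applying the bounded compliance tensors $\bCO,\bCT$ keeps us in $\VsL$. To show $\Phi \in \Vs$, by the explicit description \eqref{sigma} I must produce $c>0$ with $|(B\psi,\Phi)_{\VsL}| \le c\|\psi\|_{\VLt}$ for all $\psi \in W$. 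Here is where Problem \ref{classicalvar} enters: for $\psi \in \VWt$ we have $(B\psi,\Phi)_{\VsL} = -(B\psi,\bCt B\phi)_{\VsL} = -D(\psi,\phi) = -(F,\psi)_{\VLt}$, using the definition \eqref{D} of $D$ together with $\bN = \bCO\be$, $\bM = \bCT\bK$, and the symmetry of the compliance tensors; the Cauchy--Schwarz inequality then gives $|(B\psi,\Phi)_{\VsL}| \le \|F\|_{\VLt}\|\psi\|_{\VLt}$, so $c = \|F\|_{\VLt}$ works. Hence $\Phi \in \Vs = D(B^*)$.

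Next I would verify the second equation of Problem \ref{var}. Having shown $\Phi \in D(B^*)$, the adjoint identity \eqref{adjointequality} yields $(B^*\Phi,\psi)_{\VLt} = (B\psi,\Phi)_{\VsL}$ for all $\psi \in \VWt$, and by the computation just done this equals $-(F,\psi)_{\VLt}$ for $\psi \in \VWt$. Since $\VWt$ is dense in $\VLt$ and both sides are bounded linear functionals of $\psi \in \VLt$ (the left side because $B^*\Phi \in \VLt$, the right side obviously), the identity $(B^*\Phi,\psi)_{\VLt} = -(F,\psi)_{\VLt}$ extends to all $\psi \in \VLt$, which is exactly the second equation.

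For the first equation, I take an arbitrary $\Psi \in \Vs = D(B^*)$ and compute $(\Phi,\Psi)_{\bCi}$. By definition of $\bCi$ and of $\Phi = -\bCt B\phi$, block by block we get $(\bCOi\bN,\btau)_S = -(\nabs\bu,\btau)_S = (B\phi,\Psi)_{\VsL}$ restricted appropriately, and summing all four blocks gives $(\Phi,\Psi)_{\bCi} = (B\phi,\Psi)_{\VsL}$ — crucially the compliance tensors cancel because $\bCi\bCt = \mathrm{Id}$ on each block. Now apply the adjoint identity \eqref{adjointequality} again, this time with the roles arranged as $(B\phi,\Psi)_{\VsL} = (B^*\Psi,\phi)_{\VLt}$, valid since $\phi \in \VWt$ and $\Psi \in \Sigma$. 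Therefore $(\Phi,\Psi)_{\bCi} + (B^*\Psi,\phi)_{\VLt} = (B^*\Psi,\phi)_{\VLt} + \cdots$ — wait, more precisely $(\Phi,\Psi)_{\bCi} = (B^*\Psi,\phi)_{\VLt}$, so $(\Phi,\Psi)_{\bCi} - (B^*\Psi,\phi)_{\VLt} = 0$; I would double-check the sign convention against the statement of Problem \ref{var} (the first equation there reads $(\Phi,\Psi)_{\bCi} + (B^*\Psi,\phi)_V = 0$), which forces the identification $\Phi = -\bCt B\phi$ to produce the correct sign, consistent with the theorem's hypothesis. This completes both equations.

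\textbf{Main obstacle.} The routine parts are the block-wise algebra with the compliance tensors and the Cauchy--Schwarz estimates. The genuinely delicate point is the membership $\Phi \in \Vs$: one must be careful that the bound $|(B\psi,\Phi)_{\VsL}| \le c\|\psi\|_{\VLt}$ is verified for all $\psi \in W$ (not merely $\VWt$), and that the passage through Problem \ref{classicalvar} is legitimate — this uses that $D(\psi,\phi) = (B\psi,\bCt B\phi)_{\VsL}$ identically on $\VWt$, which in turn relies on the weak-derivative interpretation of $B$ on $\VWt$ and the fact that $\bN(\bu):\be(\bv) = \bCO\nabs\bu : \nabs\bv$ pointwise. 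The other subtlety worth stating explicitly is the density argument extending the second equation from $\VWt$ to $\VLt$; it is short but is the step that actually uses $\Phi \in D(B^*)$ rather than merely $\Phi \in \VsH$.
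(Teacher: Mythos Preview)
Your approach is essentially identical to the paper's: show $\Phi\in\VsL$ from $\phi\in\VWt$, use Problem~\ref{classicalvar} to identify $(B\psi,\Phi)_{\VsL}=-D(\phi,\psi)=-(F,\psi)_{\VLt}$ and hence $\Phi\in\Vs$, extend the second equation by density of $\VWt$ in $\VLt$, and obtain the first equation from the adjoint identity \eqref{adjointequality}. Two small remarks: first, your block computation has a sign slip ($\bCOi\bN=\nabs\bu$, not $-\nabs\bu$, so $(\Phi,\Psi)_{\bCi}=-(B\phi,\Psi)_{\VsL}$), though you correctly flag the need to recheck and the final identity $(\Phi,\Psi)_{\bCi}+(B^*\Psi,\phi)_{\VLt}=0$ does hold; second, your worry about ``all $\psi\in W$ not merely $\VWt$'' is moot, since in this paper $W$ and $\VWt$ denote the same space.
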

	\begin{proof}
		Recall the $L^2$ space $\VsL$ in \eqref{defVs}. It follows from $\phi \in \VWt$ that $\Phi = - \bCt B \phi \in \VsL$. Note that $\phi$ is the solution of Problem \ref{classicalvar}, then it holds
		$$
		\begin{aligned}
			( B\psi, \Phi )_{\VsL} &= - \int_{S} \nabs \bv : \bN \rd S+\int_{S} \nab^2 v_3:\bM \rd S -  \int_{\tS} \tnabs \tbv : \tbN \rd \tS+\int_{\tS} \tnab^2 \tv_3:\tbM \rd \tS \\
			& =- \left( \int_{S} \be(\bv) : \bN \rd S+\int_{S} \bK(v_3) :\bM \rd S +  \int_{\tS} \tbe(\tbv) : \tbN \rd \tS+\int_{\tS} \tbK(\tv_3) :\tbM \rd \tS \right) \\
			& = -D(\phi,\psi) = - \left( F , \psi \right)_{\VLt}, \quad  \forall~ \psi \in \VWt.
		\end{aligned}
		$$
		{Since $F \in \VLt$, this implies $\Phi\in \Vs$ by the definition of $\Vs$ in \eqref{sigma}, and that the second equation of Problem \ref{var} is satisfied by noting that $\VWt$ is dense in $\VLt$. }
		
		{{For any $\Psi \in \Vs, $ it follows from \eqref{adjointequality} that}}
		$$
		\begin{aligned}
			\left(  B^* \Psi , \phi \right)_{\VLt} &= (B \phi, \Psi )_{\VsL} \\
			& = - \int_{S} \nabs \bu : \btau \rd S+\int_{S} \nab^2 u_3:\bkap \rd S -  \int_{\tS} \tnabs \tbu : \tbtau \rd \tS+\int_{\tS} \tnab^2 \tu_3:\tbkap \rd \tS \\
			& = - \int_{S} \bCOi \bN: \btau \rd S - \int_{S} \bCTi  \bM :\bkap \rd S -   \int_{\tS}  \bCOi \tbN : \tbtau \rd \tS - \int_{\tS} \bCTi  \tbM :\tbkap \rd \tS \\
			&= - (\Phi, \Psi )_{\bCi}.     \\
		\end{aligned} 
		$$
		This proves the first equation of Problem \ref{var} and completes the proof. 
	\end{proof}

	\begin{theorem}
		\label{Brezzi}
		The Brezzi's conditions {(1)--(4)} hold for Problem \ref{var}.
	\end{theorem}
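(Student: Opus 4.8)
The plan is to verify Brezzi's conditions (1)--(4) one at a time, taking advantage of the fact that the graph norm on $\Vs$ is precisely $\|\Phi\|_{\Vs}^2 = \|\Phi\|_{\VsL}^2 + \|B^*\Phi\|_{\VLt}^2$. Conditions (1) and (2) are the routine boundedness estimates. For (1), since the compliance tensors $\bCO,\bCT$ are bounded and uniformly positive definite, $\bCi$ is a bounded symmetric nonnegative form on $\VsL\times\VsL$, so $|(\Phi,\Psi)_{\bCi}| \le a\|\Phi\|_{\VsL}\|\Psi\|_{\VsL} \le a\|\Phi\|_{\Vs}\|\Psi\|_{\Vs}$. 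For (2), by Cauchy--Schwarz in $\VLt$, $|(B^*\Psi,\psi)_{\VLt}| \le \|B^*\Psi\|_{\VLt}\|\psi\|_{\VLt} \le \|\Psi\|_{\Vs}\|\psi\|_{\VLt}$, so $b=1$ works.

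For condition (3), coercivity on $\operatorname{Ker}B$: by the definition of the kernel, $\Psi\in\operatorname{Ker}B$ means $(B^*\Psi,\psi)_{\VLt}=0$ for all $\psi\in\VLt$, i.e.\ $B^*\Psi=0$ in $\VLt$, which by \eqref{defB*} means $\Div\btau=0$, $\dD\bkap=0$, $\tDiv\tbtau=0$, $\tdD\tbkap=0$. Hence for $\Psi\in\operatorname{Ker}B$ we have $\|\Psi\|_{\Vs}^2 = \|\Psi\|_{\VsL}^2$, and the claim reduces to $(\Psi,\Psi)_{\bCi}\ge\alpha\|\Psi\|_{\VsL}^2$, which is immediate from the uniform positive-definiteness of $\bCOi$ and $\bCTi$.

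Condition (4), the inf-sup condition, is the main obstacle and is exactly where Theorem \ref{existence} is needed. Given $0\ne\psi\in\VLt$, I would apply Lemma \ref{coercive} to obtain the solution $\phi_\psi\in\VWt$ of Problem \ref{classicalvar} with data $F=\psi$, satisfying $\|\phi_\psi\|_{\VWt}\le c\|\psi\|_{\VLt}$, and then set $\Psi = -\bCt B\phi_\psi$, which by Theorem \ref{existence} lies in $\Vs$ and by the second equation of Problem \ref{var} satisfies $B^*\Psi = -\psi$ in $\VLt$ (equivalently $(B^*\Psi,\cdot)_{\VLt} = -(\psi,\cdot)_{\VLt}$). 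Therefore $(B^*\Psi,\psi)_{\VLt} = -\|\psi\|_{\VLt}^2$; taking $-\Psi$ in place of $\Psi$ makes this positive. It remains to bound $\|\Psi\|_{\Vs}$ from above by $\|\psi\|_{\VLt}$: since $\|\Psi\|_{\VsL} = \|\bCt B\phi_\psi\|_{\VsL}\le C\|B\phi_\psi\|_{\VsL}\le C'\|\phi_\psi\|_{\VWt}\le C'c\|\psi\|_{\VLt}$ using boundedness of $\bCt$ and of the differential operators in $B$ on $\VWt$, and $\|B^*\Psi\|_{\VLt} = \|\psi\|_{\VLt}$, we get $\|\Psi\|_{\Vs}\le C''\|\psi\|_{\VLt}$. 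Combining,
$$
\sup_{0\ne\Psi'\in\Vs}\frac{(B^*\Psi',\psi)_{\VLt}}{\|\Psi'\|_{\Vs}\|\psi\|_{\VLt}} \ge \frac{\|\psi\|_{\VLt}^2}{C''\|\psi\|_{\VLt}\|\psi\|_{\VLt}} = \frac{1}{C''} =: \beta > 0,
$$
which is uniform in $\psi$, establishing (4) and hence the theorem. The only subtlety to check carefully is that the bound $\|B\phi_\psi\|_{\VsL}\lesssim\|\phi_\psi\|_{\VWt}$ genuinely follows from the $H^1$ and $H^2$ continuity of $\nabs$ and $\nab^2$ built into the norm \eqref{normH}, and likewise on $\tS$, so that the constant $\beta$ depends only on the material parameters, the geometry, and the constant $c$ from Lemma \ref{coercive}.
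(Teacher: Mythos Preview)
Your proposal is correct and follows essentially the same approach as the paper: conditions (1)--(3) are handled directly via the boundedness and positive-definiteness of $\bCt$ (the paper gives the explicit constants $a=1/\lambda_{\min}(\bCt)$, $b=1$, $\alpha=1/\lambda_{\max}(\bCt)$), and for (4) one uses Lemma~\ref{coercive} and Theorem~\ref{existence} to produce, for each $\psi\in\VLt$, a $\Psi=-\bCt B\phi_\psi\in\Vs$ with $B^*\Psi=\pm\psi$ and $\|\Psi\|_{\Vs}\lesssim\|\psi\|_{\VLt}$. The only cosmetic differences are that the paper takes $F=-\psi$ from the outset (avoiding the sign flip) and bounds $\|\Psi\|_{\VsL}^2$ via $\lambda_{\max}(\bCt)\,D(\phi_\psi,\phi_\psi)=\lambda_{\max}(\bCt)(F,\phi_\psi)_{\VLt}$ rather than directly through $\|B\phi_\psi\|_{\VsL}\le\|\phi_\psi\|_{\VWt}$.
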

	
	\begin{proof}
		{Define $\lambda_{\min}(\bCt_i)$ and $\lambda_{\max}(\bCt_i)$ as the minimum and the maximum eigenvalue of $\bCt_i$ for $i=1,2$, respectively.} Let
		$$
		\lambda_{\min }(\bCt) \coloneqq \min \{ \lambda_{\min}(\bCO),   \lambda_{\min}(\bCT) \}, \quad \lambda_{\max }(\bCt) = \max \{ \lambda_{\max}(\bCO),   \lambda_{\max}(\bCT) \}.
		$$
		The verification of Brezzi's conditions (1)--(3) is straightforward, by assigning constants $a=1 / \lambda_{\min }(\bCt), b=1$ and $\alpha=1 / \lambda_{\max }(\bCt).$
		{{To prove the inf-sup condition (4)}}, let $\phi^\psi$ be the solution of Problem \ref{classicalvar} with the right-hand side $F^\psi=-\psi \in \VLt$ for a fixed but arbitrary $\psi \in \VLt$. {Theorem \ref{existence} shows }that $\Phi^\psi=- \bCt B \phi^\psi \in \Vs$, and $\left(   \Phi^\psi , \phi^\psi\right)$ is {a solution of Problem} \ref{var}. It follows from the second {equation} of Problem \ref{var} that
		\begin{equation}
			\label{eqq1}
				( B^* \Phi^{\psi}, \psi )_{\VLt} =-(F^\psi, \psi)_{V} = (\psi, \psi)_\VLt=\|\psi\|_V^2,
		\end{equation}
		and
		$$
		\|B^* \Phi^{\psi} \|_{\VLt} = \sup _{q \in \VLt} \frac{\left(  B^* \Phi^{\psi}, q \right)}{\|q\|_\VLt }=\sup _{q \in \VLt} \frac{(\psi, q)_\VLt}{\|q\| }=\|\psi\|_\VLt .
		$$
		{Additionally, one can obtain}
		$$
		\begin{aligned}
			\| \Phi^\psi \|_{\VsL}^2 & =\| \bCO \nab \bu^\psi \|_{0,S}^2 +  \| \bCT \nab^2 u_{3}^\psi \|_{0,S}^2 + \| \bCO \tnab \tbu^\psi \|_{0,\tS}^2 + \| \bCT \tnab^2 \tu_3^\psi \|_{0,\tS}^2 \\
			& \leq \lambda_{\max }(\bCt)D(\phi^\psi, \phi^\psi) =\lambda_{\max}(\bCt)( F^\psi, \phi^\psi )_{\VLt} .
		\end{aligned}
		$$
		{Lemma \ref{coercive} shows $\| \phi^\psi \|_\VWt \leq c \| F^\psi \|_{\VWt^*}$.}
		Let $c^{\prime}= c \lambda_{\max }(\bCt)$. Then 
$$\| \Phi^\psi \|_{\VsL}^2
		\leq \lambda_{\max }(\bCt) \| F^\psi \|_{\VWt^*} \| \phi^\psi \|_\VWt \leq c^{\prime} \| F^\psi \|_{\VWt^*}^2 \leq c^{\prime} \| F^\psi \|_{\VLt}^2=c^{\prime}\|\psi\|_\VLt^2.$$
		Hence,
		\begin{equation}
			\label{eqq2}
				\|\Phi^\psi \|_{\Vs}^2= \|\Phi^\psi \|_{\VsL}^2+ \| B^* \Phi^{\psi} \|_{ \VLt}^2 \leq(1+c^{\prime})\|\psi\|_\VLt^2. 
		\end{equation}
		{Combining \eqref{eqq1} and \eqref{eqq2} can get}
		$$
		\sup _{0 \neq \Psi \in \Vs} \frac{ \left(  B^* \Psi, \psi\right)_{\VLt} } {\|\Psi \|_{\Vs}} \geq \frac{\left( B^* \Phi^{\psi}, \psi \right)_{\VLt}}{\left\|  \Phi^{\psi} \right\|_{\Vs}} \geq(1+c^{\prime})^{-1 / 2}\| \psi\|_\VLt,
		$$
		which completes the proof with $\beta = (1+c^{\prime})^{-1 / 2}$.
	\end{proof}
	
	\begin{theorem}
		For $F \in \VLt$, Problem \ref{classicalvar} and Problem \ref{var} are equivalent in the following sense: If $\phi$ solves Problem \ref{classicalvar}, then $\Phi =- \bCt B \phi \in \Vs $ and $(\Phi, \phi)$ solves Problem \ref{var}. Conversely, if $(\Phi, \phi)$ solves Problem \ref{var}, then $\phi \in \VWt$ and solves Problem \ref{classicalvar}.
	\end{theorem}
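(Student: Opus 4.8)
The plan is to combine the one-directional result already established in Theorem~\ref{existence} with the two well-posedness statements available to us: Lemma~\ref{coercive} for Problem~\ref{classicalvar} and Theorem~\ref{Brezzi} for Problem~\ref{var}. The forward implication — if $\phi$ solves Problem~\ref{classicalvar} then $\Phi = -\bCt B\phi \in \Vs$ and $(\Phi,\phi)$ solves Problem~\ref{var} — is precisely the content of Theorem~\ref{existence}, so nothing new is needed there; it only has to be quoted.

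For the converse, the plan is as follows. First I would invoke Lemma~\ref{coercive}: since $D(\bullet,\bullet)$ is continuous and coercive on $\VWt \times \VWt$, the Lax--Milgram theorem gives a \emph{unique} $\phi_0 \in \VWt$ solving Problem~\ref{classicalvar} for the given $F \in \VLt$. Applying Theorem~\ref{existence} to $\phi_0$ then produces a solution $(\Phi_0, \phi_0)$ of Problem~\ref{var} with $\Phi_0 = -\bCt B\phi_0 \in \Vs$. On the other hand, Theorem~\ref{Brezzi} verifies all four Brezzi conditions for Problem~\ref{var}, so by Brezzi's theory the saddle-point problem has a \emph{unique} solution $(\Phi,\phi) \in \Vs \times \VLt$ — uniqueness of the first component coming from coercivity on $\operatorname{Ker} B$ (condition (3)), uniqueness of the second from the inf-sup condition (condition (4)). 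Since $(\Phi_0,\phi_0)$ is also a solution of Problem~\ref{var}, the two solutions must coincide, giving $\Phi = \Phi_0$ and $\phi = \phi_0$. In particular $\phi = \phi_0 \in \VWt$ and solves Problem~\ref{classicalvar}, which is exactly the assertion.

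The only point that warrants a little care is ensuring that uniqueness of the \emph{entire} pair $(\Phi,\phi)$ — not merely of the stress/moment component $\Phi$ — is in hand, since this is what lets us pin down $\phi$ and conclude $\phi \in \VWt$; this is precisely why Theorem~\ref{Brezzi} establishes the inf-sup condition (4) alongside the kernel-coercivity condition (3). Beyond that, the argument is a short formal deduction that merely stitches together Lemma~\ref{coercive}, Theorem~\ref{existence}, and Theorem~\ref{Brezzi}, so I do not anticipate a genuine obstacle here: the substantive work has already been carried out in establishing those three results.
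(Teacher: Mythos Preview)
Your proposal is correct and follows essentially the same approach as the paper: invoke unique solvability of both problems (Lemma~\ref{coercive} and Theorem~\ref{Brezzi}), and then use the one-directional implication from Theorem~\ref{existence} together with uniqueness to conclude the converse. The paper's proof is simply a two-sentence compression of exactly this argument.
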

	
	\begin{proof}
		Both Problem \ref{classicalvar} and \ref{var} are uniquely solvable due to Lemma \ref{coercive} and Theorem \ref{Brezzi}. {Thus, it suffices to demonstrate that the solution to one of the problems is also a solution to the other, which is already proven in Theorem {\ref{existence}}.}
	\end{proof}

	\subsection{Continuity conditions}
{For finite element computations, it is necessary to specify the continuity conditions of smooth functions in $\Vs$.}
	Introduce the space
	$$
	\VsC \coloneqq \{ \Psi = (\btau,\bkap; \tbtau, \tbkap); (\btau,\bkap)\in C^0(\bar S; \mathbb{S})\times C^1(\bar S; \mathbb{S}), (\tbtau,\tbkap)\in C^0( \bar{\tS}; \mathbb{S})\times C^1(\bar{\tS};\mathbb{S})\}.
	$$
	Here $\bar S$ denotes the closure of $S$.
	This theorem {shows} that the natural conditions in Problem \ref{classicalvar} become  essential conditions in Problem \ref{var}, which are hidden within the definition of $\Vs$ in \eqref{sigma} and can be explicitly extracted for smooth enough functions $\Phi$. 

	
	\begin{theorem} 
		\label{continuty}
		{Let $\Phi \in \VsL \cap \VsC$. Then $\Phi \in \Vs$ if and only if $\Phi$ satisfies} the free boundary conditions \eqref{bd2}--\eqref{bd3}, the junction conditions \eqref{jun3}--\eqref{jun4}, and {the} corner conditions \eqref{cornercondition}.
		\end{theorem}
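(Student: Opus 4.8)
The plan is to exploit the explicit description \eqref{sigma} of $\Vs$. Since $\Phi$ is smooth up to the closure of $S$ and of $\tS$, the boundary quantities $\bN\bn,T,M_{nn},M_{nt}$ and their $\tS$-analogues from \eqref{defT}, \eqref{jump} are classically defined, $B^{*}\Phi=(\Div\bN,\dD\bM;\tDiv\tbN,\tdD\tbM)\in\VsL$ (hence $\Phi\in\VsH$), and the classical Green's formula applies. Consequently $\Phi\in\Vs$ if and only if the functional $\psi\mapsto(B\psi,\Phi)_{\VsL}$ on $W$ is bounded by a multiple of $\|\psi\|_{\VLt}$, and everything reduces to integrating by parts and tracking which boundary and corner contributions survive.

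First I would, for $\psi=(\bv,v_3;\tbv,\tv_3)\in W$, integrate by parts in (using \eqref{defB})
$$(B\psi,\Phi)_{\VsL}=-(\nabs\bv,\bN)_S+(\nab^2 v_3,\bM)_S-(\tnabs\tbv,\tbN)_{\tS}+(\tnab^2\tv_3,\tbM)_{\tS},$$
using the Gauss formula for the first-order terms and the Kirchhoff double integration by parts for the second-order terms, which on a polygonal domain produces the edge densities $T$, $M_{nn}$ and the vertex jumps $\llbracket M_{nt}\rrbracket_x$. Because $\psi$ satisfies \eqref{bd1}, all contributions on $\partial S_0$ vanish, and there remains
$$(B\psi,\Phi)_{\VsL}=(\psi,B^{*}\Phi)_{\VLt}+b_{\partial S_1}(\psi)+b_{\partial\tS_1}(\psi)+b_{\Gamma}(\psi)+b_{\mathrm{cor}}(\psi),$$
where $b_{\partial S_1}(\psi)=\int_{\partial S_1}\big((\bN\bn)\cdot\bv+Tv_3+M_{nn}\,\partial_n v_3\big)\,\rd s+\sum_{x\in\mathcal{V}_{\partial S_1}}\llbracket M_{nt}\rrbracket_x v_3(x)$, $b_{\partial\tS_1}$ is its $\tS$-analogue, $b_\Gamma$ gathers the $\Gamma$-contributions of \emph{both} plates, and $b_{\mathrm{cor}}$ gathers the vertex terms at $x\in\mathcal{E}_\Gamma$. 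As $(\psi,B^{*}\Phi)_{\VLt}$ is automatically bounded by $\|B^{*}\Phi\|_{\VLt}\|\psi\|_{\VLt}$, the condition $\Phi\in\Vs$ is equivalent to $L^2$-boundedness over $\psi\in W$ of $\ell(\psi):=b_{\partial S_1}(\psi)+b_{\partial\tS_1}(\psi)+b_\Gamma(\psi)+b_{\mathrm{cor}}(\psi)$.

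Next I would process the junction terms: using the frame identities \eqref{coord} and the kinematic constraints \eqref{jun1}--\eqref{jun2} carried by $\psi$, I eliminate $(\bv,v_3,\partial_n v_3)|_\Gamma$ in favour of the four free traces $(\tv_1,\tv_2,\tv_3,\partial_{\tn}\tv_3)|_\Gamma$, which rewrites $b_\Gamma(\psi)$ as a single line integral over $\Gamma$ whose four coefficients are exactly $\tN_{\tn\tn}+N_{nn}\cos\theta-T\sin\theta$, $\tN_{\tn\ttt}-N_{nt}$, $\tT-N_{nn}\sin\theta-T\cos\theta$ and $\tM_{\tn\tn}-M_{nn}$, and rewrites $b_{\mathrm{cor}}(\psi)$, after substituting $v_3(x)=-\tv_1(x)\sin\theta-\tv_3(x)\cos\theta$, as a sum over $x\in\mathcal{E}_\Gamma$ of $-\llbracket M_{nt}\rrbracket_x\sin\theta(x)\,\tv_1(x)$ and $\big(\llbracket\tM_{\tn\ttt}\rrbracket_x-\llbracket M_{nt}\rrbracket_x\cos\theta(x)\big)\tv_3(x)$. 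With this in place the two implications are short. For ``$\Leftarrow$'': \eqref{bd2}--\eqref{bd3} make $b_{\partial S_1}=b_{\partial\tS_1}=0$ (including the vertex terms), and \eqref{jun3}--\eqref{jun4}, \eqref{cornercondition} make $b_\Gamma=b_{\mathrm{cor}}=0$; thus $(B\psi,\Phi)_{\VsL}=(\psi,B^{*}\Phi)_{\VLt}$ — this is \eqref{adjointequality} for smooth $\Phi$ — and the required bound is immediate, so $\Phi\in\Vs$. For ``$\Rightarrow$'': from $\Phi\in\Vs$ one has $|\ell(\psi)|\le C\|\psi\|_{\VLt}$ on $W$, and I localize: test functions supported near an interior point of an edge of $\partial S_1$, vanishing near $\partial S_0$, near $\Gamma$ and near the vertices, and concentrated so that a prescribed boundary profile of $(\bv,v_3,\partial_n v_3)$ is retained while $\|\psi\|_{\VLt}\to0$, force $\bN\bn=0$, $M_{nn}=0$, $T=0$ on $\partial S_1$ (and similarly on $\partial\tS_1$); next, with the edge densities gone, test functions with $v_3(x)\neq0$ at $x\in\mathcal{V}_{\partial S_1}$ (resp. $\mathcal{V}_{\partial\tS_1}$) force $\llbracket M_{nt}\rrbracket_x=0$, completing \eqref{bd2}--\eqref{bd3}; then test functions concentrated near an interior point of $\Gamma$ — with the $\tS$-part built so that \eqref{jun1}--\eqref{jun2} hold and the support stays away from $\mathcal{E}_\Gamma$, $\partial S_1$, $\partial\tS_1$ — reduce $\ell$ to $b_\Gamma$ and force the four coefficients above to vanish, i.e. \eqref{jun3}--\eqref{jun4}; finally test functions with a nonzero value at $x\in\mathcal{E}_\Gamma$ reduce $\ell$ to $b_{\mathrm{cor}}$ and give \eqref{cornercondition}.

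The hard part will be the bookkeeping in the $\Gamma$-step: carrying out the Kirchhoff double integration by parts correctly on a polygon and then \emph{matching} the edge and vertex contributions of the two plates across $\Gamma$ through the rotation \eqref{coord} and the constraints \eqref{jun1}--\eqref{jun2}, so that exactly \eqref{jun3}--\eqref{jun4} and \eqref{cornercondition} come out — in particular one must check that the tangential-derivative terms $\partial_t v_3$ generated by the twisting moment are correctly reassembled (into $T$ along $\Gamma$ and into the corner jumps at $\mathcal{E}_\Gamma$) and that no spurious condition appears. A secondary technicality, in the ``$\Rightarrow$'' part, is the construction near $\Gamma$ of admissible concentrating test functions in $W$: the $\tS$-component must be produced consistently with \eqref{jun1}--\eqref{jun2} while keeping the support localized and the $\VLt$-norm arbitrarily small, which needs a short extension/cutoff argument; the analogous constructions near $\partial S_1$ and its vertices are routine.
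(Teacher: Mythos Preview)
Your proposal is correct and follows the same overall route as the paper: integrate by parts on each plate, use the kinematic constraints \eqref{jun1}--\eqref{jun2} to rewrite the $\Gamma$-contributions in terms of the free traces $(\tv_1,\tv_2,\tv_3,\partial_{\tn}\tv_3)$, and read off \eqref{bd2}--\eqref{bd3}, \eqref{jun3}--\eqref{jun4}, \eqref{cornercondition} as exactly the conditions making the boundary functional $\ell$ vanish.

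The one place the paper proceeds differently is the ``$\Rightarrow$'' direction. Instead of constructing concentrating test functions with $\|\psi\|_{\VLt}\to 0$ while holding boundary profiles fixed, the paper introduces the subspace $\bar W\subset W$ of functions with \emph{all} boundary traces vanishing on $\partial S$ and $\partial\tS$, notes that $\bar W$ is dense in $\VLt$, and observes that on $\bar W$ one has $(G,\psi)_{\VLt}=I_d=(B^{*}\Phi,\psi)_{\VLt}$. Since both $G$ and $(B^{*}\Phi,\cdot)_{\VLt}$ are bounded $\VLt$-functionals agreeing on a dense set, they coincide on all of $W$, whence $\ell\equiv 0$ on $W$ (not merely bounded). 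From there the individual conditions follow by choosing $\psi\in W$ with independent boundary data, without any limiting construction. This density argument is shorter and sidesteps the extension/cutoff technicalities you flagged near $\Gamma$; your concentration approach is also valid but more laborious.
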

		
		\begin{proof}
			For $\Phi \in \VsL \cap \VsC$, define the linear functional $G$ acting on $\psi \in \VWt$ as
			\begin{equation}
				\label{defG}
				(G,\psi )_{\VLt} \coloneqq ( B \psi, \Phi )_{\VLt}.
			\end{equation}
			It follows from the definition of $B$ in \eqref{defB} that 
			\begin{equation}
				\label{defIs}
				\begin{aligned}
					\quad ( B \psi, \Phi )_{\VLt} & = I_{S} + I_{\tS} \\
					&\coloneqq \left(- \int_S \bN: \nabs \bv \rd S + \int_S \bM : \nab^{2} v_{3} \rd S \right) \\
					&\qquad + \left(- \int_{\tS} \tbN : \tnabs \tbv \rd \tS + \int_{\tS} \tbM \cdot \tnab \tv_3 \rd \tS \right).  \\
				\end{aligned}
			\end{equation}
			{An integration} by parts and $\Phi \in \VsC$ show 
			\begin{equation}
				\label{Gauss1}
				\begin{aligned}
					\int_S \bN: \nabs \bv \rd S &= -\int_{S} \Div \bN \cdot \bv \rd S + \int_{\partial S} \bN \bn \cdot \bv \rd s, \\
					\int_S \bM : \nab^2 v_{3} d S &= \int_{S} \dD \bM v_{3} \rd S - \int_{\partial S} \Div \bM \cdot \bn v_{3} \rd s \\
					& \qquad + \int_{\partial S} M_{nn} \partial_{n} v_{3} \rd s - \int_{\partial S}  \partial_{t} M_{nt} v_{3} \rd s + \sum_{x \in \mathcal{V}_{\partial S}}\llbracket M_{nt} v_{3} \rrbracket_{x}. \\
				\end{aligned}
			\end{equation}
			Substituting \eqref{Gauss1} into $I_S$ {defined} in \eqref{defIs} gives
			\begin{equation}
				\label{plateS}
				I_S = \int_{S} \Div \bN \cdot \bv \rd S + \int_{S} \dD \bM v_{3} \rd S +  \RomO + \RomT + \RomR+ \RomF,
			\end{equation}
			with
			\begin{equation}
				\label{defI1}
				\begin{aligned}
					\RomO & \coloneqq -\int_{\partial S} \bN \bn \cdot \bv \rd s = -\int_{\partial S_{1}} \bN \bn \cdot \bv \rd s-\int_{\Gamma} \bN \bn \cdot \bv \rd s, \\
					\RomT & \coloneqq \int_{\partial S} M_{nn} \partial_{n} v_3  \rd s = \int_{\partial S_{1}} M_{nn} \partial_{n} v_3 \rd s+\int_{\Gamma}M_{nn} \partial_{n} v_3 \rd s, \\
					\RomR & \coloneqq -\int_{\partial S} T v_3 \rd s  = -\int_{\partial S_{1}}  T v_3  \rd s-\int_{\Gamma}  T v_3 \rd s, \\
					\RomF & \coloneqq \sum_{x \in \mathcal{V}_{\partial S}} \llbracket M_{nt} v_{3} \rrbracket_{x} = \sum_{ {x \in \mathcal{V}_{\partial S_1 }}} \llbracket M_{nt} v_{3} \rrbracket_{x} + \sum_{ {x \in \mathcal{E}_{\Gamma}}} \llbracket M_{nt} v_{3} \rrbracket_{x}.
				\end{aligned}
			\end{equation}
			{Note that \eqref{defI1} relies on} $\partial S = \partial S_0 \cup \partial S_1 \cup \Gamma$ and the definition of $\VWt$ in \eqref{W}. Similarly, due to $\partial \tS =  \partial \tS_{1} \cup \Gamma$, it holds  
			\begin{equation}
				\label{platetS}
				I_{\tS} = \int_{\tS} \tDiv \tbN \cdot \tbv \rd \tS + \int_{\tS} \tdD \tbM \tv_{3} \rd \tS + \tRomO+ \tRomT + \tRomR + \tRomF,
			\end{equation}
			with
			\begin{equation*}
				\begin{aligned}
					\tRomO & \coloneqq -\int_{\partial \tS_{1}} \tbN \tbn \cdot \tbv \rd \ts-\int_{\Gamma} \tbN \tbn \cdot \tbv \rd \ts,& \tRomT & \coloneqq \int_{\partial \tS_{1}} \tM_{\tn\tn} \tpar_{\tn} \tv_3 \rd \ts+\int_{\Gamma} \tM_{\tn \tn} \tpar_{\tn} \tv_3 \rd \ts,& \\
					\tRomR & \coloneqq -\int_{\partial \tS_{1}}  \tT \tv_3  \rd \ts-\int_{\Gamma}  \tT \tv_3 \rd \ts,&  \tRomF & \coloneqq \sum_{ {x \in \mathcal{V}_{\partial \tS_1 }}} \llbracket \tM_{\tn \ttt} \tv_{3} \rrbracket_{x} + \sum_{ {x \in \mathcal{E}_{\Gamma}}} \llbracket \tM_{\tn \ttt} \tv_{3} \rrbracket_{x}.&
				\end{aligned}
			\end{equation*}
			It should be emphasized that the integral directions on $\Gamma$ for $S$ and $\tS$ are opposite, as dictated by the definition of local coordinate systems in Section \ref{hypotheses}.
			{Recall that $\psi \in \VWt$ satisfies}
			\begin{equation}
				\label{relation}
				\partial_{n} v_3+\tpar_{\tn} \tv_3=0,\quad v_{1} = \tv_{1} \cos \theta - \tv_3 \sin \theta, \quad v_2 = -\tv_2, \quad v_3 = -\tv_1 \sin \theta - \tv_3 \cos \theta.
			\end{equation}
			Substituting \eqref{plateS} and \eqref{platetS} into \eqref{defIs} and using \eqref{relation} lead to 
			\begin{equation}
				\label{key}
				\begin{aligned}
					(G, \psi )_{\VLt} &= I_d + I_b + I_j + I_c,
				\end{aligned}
			\end{equation}
with
			\begin{equation*}
				\begin{aligned}
					I_{d} &\coloneqq \int_{S} \Div \bN \cdot \bv \rd S + \int_{S} \dD \bM v_{3} \rd S + \int_{\tS} \tDiv \tbN \cdot \tbv \rd \tS + \int_{\tS} \tdD \tbM \tv_{3} \rd \tS, \\
					I_{b} & \coloneqq - \int_{\partial S_1} \bN \bn \cdot \bv \rd s+\int_{\partial S_{1}} M_{nn} \partial_{n} v_3 \rd s - \int_{\partial S_1}  T v_{3} \rd s + \sum_{ {x \in \mathcal{V}_{\partial S_1 }}} \llbracket M_{nt} v_{3} \rrbracket_{x} \\
					&\quad \quad - \int_{\partial \tS_1} \tbN \tbn \cdot \tbv \rd \ts + \int_{\partial \tS_{1}} \tM_{\tn\tn} \tpar_{\tn} \tv_3 \rd \ts- \int_{\partial \tS_1}  \tT \tv_{3} \rd \ts + \sum_{ {x \in \mathcal{V}_{\partial \tS_1 }}} \llbracket \tM_{\tn \ttt} \tv_{3} \rrbracket_{x},\\
					I_{j}&\coloneqq \int_{\Gamma} (-N_{nn}\cos\theta - \tN_{\tn \tn} + T \sin\theta)\tv_1 \rd s + \int_{\Gamma} (N_{nt} - \tN_{\tn \ttt}) \tv_2 \rd s\\
					&\qquad + \int_{\Gamma} (N_{nn}\sin\theta + T\cos \theta - \tT) \tv_3 \rd s + \int_{\Gamma} (M_{nn}-\tM_{\tn \tn}) \tpar_{\tn}\tv_3 \rd s,\\
					I_{c} &\coloneqq \sum_{ {x \in \mathcal{E}_{\Gamma }}} \llbracket M_{nt} \rrbracket_{x}\tv_{1}(x)\sin \theta + \sum_{ {x \in \mathcal{E}_{\Gamma }}} \left( \llbracket M_{nt} \rrbracket_{x}  -  \llbracket \tM_{\tn \ttt} \rrbracket_{x} \cos \theta \right)  \tv_{3}(x).
				\end{aligned}
			\end{equation*}
			
			The only if implications follow immediately from \eqref{key}. To be specific, assume that $\Phi$ satisfies the boundary conditions \eqref{bd2}--\eqref{bd3}, \eqref{jun3}--\eqref{jun4}, and \eqref{cornercondition}, then it follows that
			$$
			(G, \psi)_{\VLt} = I_d = (B^* \Phi, \psi)_{\VLt} \leq \|B^* \Phi \|_{\VLt} \|\psi\|_{\VLt}, \quad \forall~\psi \in \VWt.
			$$
			Hence, $G$ is a linear functional on $\VLt$ and $\Phi \in \Vs$.
			
			For the if implications, assume $\Phi \in \Vs$, then the functional $G$ given by \eqref{defG} is bounded with respect to the $\VLt$-norm, for all $\psi \in \VWt$. Introduce the space
			$$
			\bar{\VWt} = \left\{ \Psi = (\bv,v_3; \tbv, \tv_3):\; \Psi \in H,\text{and } \bv,v_3,\partial_{n}v_3=0\text{ on } \partial S, \text{and } \tbv,\tv_3,\partial_{\tn} \tv_3=0\text{ on } \partial \tS \right\}.
			$$
			It is easy to verify that $\bar{\VWt} \subset \VWt$ and $\bar{\VWt}$ is dense in $\VWt$ with respect to the $V$-norm. For $\psi \in \bar{\VWt}$, it follows from \eqref{key} that
			\begin{equation}
				\label{barW}
				(G, \psi )_{\VLt} = I_d, \quad \forall~\psi \in\bar{\VWt}.
			\end{equation}
			Then one can obtain that $\eqref{barW}$ holds for $\psi \in \VWt$. This implies together with \eqref{key} that 
			$$
			I_b + I_j + I_c = 0, \quad \forall~\psi \in \VWt.
			$$
			{This and standard arguments lead to} \eqref{bd2}--\eqref{bd3}, \eqref{jun3}--\eqref{jun4}, and \eqref{cornercondition}.
		\end{proof}

		
		
		\section{Mixed finite element methods}
		
		{This section presents a framework of conforming mixed finite element methods for solving Problem \ref{var}. The well-posedness and error estimates of the discrete mixed variational problem are demonstrated. Numerical examples are provided to verify the theoretical results.}
		
		\subsection{Mixed finite element methods}
		Let $\mathcal{T}_h$ and $\tTau$ be a shape regular {triangulation} of $S$ and $\tS$, respectively, which satisfy the compatibility conditions on {junction} $\Gamma$. {Let $h$ be} {the} maximum of the diameters of all the elements $K \in \mathcal{T}_h$ and $\tK \in \tTau$. 
		Recall the space $\VLt$ in  \eqref{VLt}, the space $\VsH$ in \eqref{VHs}, and the continuty conditions for sufficient smooth functions in Theorem \ref{continuty}. Provided two conforming finite element spaces
		$$
		\begin{aligned}
			\Vs_h \subset \{\Psi_h = \left( \btauh, \bkaph ; \tbtauh, \tbkaph \right) \in \VsH: \; & \btauh|_{K} \in P_{k_1}(K,\mathbb{S}), \bkaph|_{K} \in P_{k_2}(K,\mathbb{S}),\forall K \in \Tau, \\
			&\tbtauh|_{\tK} \in P_{k_1}(\tK,\mathbb{S}), \tbkaph|_{\tK} \in P_{k_2}(\tK,\mathbb{S}),\forall \tK \in \tTau, \\
			& \Psi {\text{ satisfies the conditions in Theorem \ref{continuty}}} \},
		\end{aligned}
		$$
		and 
		$$
		\begin{aligned}
			\VLt_h = \{\psi_h =\left( \bv_h, v_{3h} ; \tbv_h, \tv_{3h} \right) \in \VLt: \;& \bv_h |_{K} \in P_{k_3}(K;\mathbb{R}^2), v_{3h} |_{K} \in P_{k_4}(K), \forall K \in \Tau, \\
			& \tbv_h |_{\tK} \in P_{k_3}(\tK;\mathbb{R}^2), \tv_{3h} |_{\tK} \in P_{k_4}(\tK),\forall \tK \in \tTau  \}
		\end{aligned}
		$$
		for some integers $k_1, k_2, k_3, k_4$ such that $B^* \Vs_h \subseteq \VLt_h$, 
		the discrete mixed formulation of Problem \ref{var} can be written as follows. 
		\begin{pro}
			\label{prodiscrete}
			Given $ F \in V$, find $\Phi_h \coloneqq (\bN_h,\bM_h; \tbN_h,\tbM_h)\in \Vs_h$ and $\phi_h \coloneqq (\bu_h,u_{3h};\tbu_h,\tu_{3h}) \in V_h$ such that
			\begin{equation}
				\label{varfem}
				\left\{   
				\begin{aligned}
					( \Phi_h, \Psi_h )_{\bCi} + ( B^{*}\Psi_h, \phi_h )_{\VLt}   & =0,&   &\forall~\Psi_h  \in \Vs_h, &\\
					( B^{*}\Phi_h, \psi_h )_{\VLt}   & = - \left( F,\psi_h \right)_{\VLt},&   &\forall~\psi_h \in V_h.& \\
				\end{aligned}\right.
			\end{equation}		
		\end{pro}

		{Instead of analyzing specific conforming finite elements, this paper provides a framework of conforming mixed finite element methods based on the following assumptions.}
		\newenvironment{assumption}[1]{\medskip\noindent{Assumption (#1):}\rmfamily\label{#1}}{\par\medskip}
		\newcommand{\cA}[1]{(\ref{#1})}
		
		%

		\begin{assumption}{A1}
			{			For all $\psi_h \in V_h$, there exists $\Phi \in \tilde{\Vs} \subset \Vs$ equipped with the norm $\| \bullet \|_{\tilde{\Vs}}$ with extra regularity compared to $\Vs$ such that $B^* \Phi = \psi_h$.}
		\end{assumption}
		\begin{assumption}{A2} There exists a Fortin operator $\Pi_h: \tilde{\Vs} \rightarrow \Vs_h$ such that  
			$
			Q_h B^{*} \Phi = B^{*}\Pi_h \Phi,
			$
			where $Q_h$ is the $L^2$ projection from $\VLt$ to $\VLt_h$, namely,  
			$$
			(B^{*}\Phi, \psi_h)_{\VLt} = (B^{*}\Pi_h \Phi, \psi_h)_{\VLt}, \quad \forall~\psi_h \in V_h.
			$$ 
			{The stability $\| \Pi_h \Phi\|_{\Vs} \leq C \|\Phi\|_{\tilde{\Vs}}$ holds with a constant $C$}.
		\end{assumption}
		\begin{theorem}
			Problem \ref{prodiscrete} is well-posedness under Assumptions $\mathrm{(A1)}$--$\mathrm{(A2)}$.
		\end{theorem}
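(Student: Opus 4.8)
The plan is to verify the discrete analogues of Brezzi's conditions (1)--(4) from Section \ref{32} for the finite element pair $(\Vs_h,\VLt_h)$ and then invoke Brezzi's theorem to obtain a unique, stable solution of Problem \ref{prodiscrete}. Since $\Vs_h\subset\Vs$ and $\VLt_h\subset\VLt$, the two boundedness conditions carry over verbatim with the \emph{same} constants as in Theorem \ref{Brezzi}: from the spectral bounds of $\bCt$ one gets $|(\Phi_h,\Psi_h)_{\bCi}|\le\lambda_{\min}(\bCt)^{-1}\|\Phi_h\|_{\Vs}\|\Psi_h\|_{\Vs}$, hence $a=1/\lambda_{\min}(\bCt)$, and $|(B^{*}\Psi_h,\psi_h)_{\VLt}|\le\|B^{*}\Psi_h\|_{\VLt}\|\psi_h\|_{\VLt}\le\|\Psi_h\|_{\Vs}\|\psi_h\|_{\VLt}$, hence $b=1$.

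For the coercivity on the discrete kernel, the crucial structural hypothesis is the inclusion $B^{*}\Vs_h\subseteq\VLt_h$ imposed on the spaces. For $\Psi_h\in\operatorname{Ker}B_h\coloneqq\{\Psi_h\in\Vs_h:\ (B^{*}\Psi_h,\psi_h)_{\VLt}=0,\ \forall\,\psi_h\in\VLt_h\}$, taking $\psi_h=B^{*}\Psi_h\in\VLt_h$ forces $\|B^{*}\Psi_h\|_{\VLt}=0$, so $\|\Psi_h\|_{\Vs}^2=\|\Psi_h\|_{\VsL}^2$; the spectral lower bound of $\bCi$ then yields $(\Psi_h,\Psi_h)_{\bCi}\ge\lambda_{\max}(\bCt)^{-1}\|\Psi_h\|_{\Vs}^2$, i.e.\ $\alpha=1/\lambda_{\max}(\bCt)$. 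This is the exact discrete counterpart of the continuous argument, where $\operatorname{Ker}B$ likewise contains only elements annihilated by $B^{*}$.

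The substantive step is the discrete inf-sup condition, which I would prove by a Fortin-operator argument built precisely on Assumptions (A1)--(A2). Fix $0\neq\psi_h\in\VLt_h$. By (A1) there exists $\Phi\in\tilde{\Vs}\subset\Vs$ with $B^{*}\Phi=\psi_h$ and $\|\Phi\|_{\tilde{\Vs}}\le c\|\psi_h\|_{\VLt}$ (the uniform lifting bound being part of (A1); alternatively it follows from Theorem \ref{existence} and Lemma \ref{coercive} together with the extra regularity encoded in $\tilde{\Vs}$). Set $\Psi_h\coloneqq\Pi_h\Phi\in\Vs_h$. The commuting identity in (A2) gives $(B^{*}\Psi_h,\psi_h)_{\VLt}=(B^{*}\Phi,\psi_h)_{\VLt}=(\psi_h,\psi_h)_{\VLt}=\|\psi_h\|_{\VLt}^2$, while the stability in (A2) gives $\|\Psi_h\|_{\Vs}\le C\|\Phi\|_{\tilde{\Vs}}\le Cc\|\psi_h\|_{\VLt}$; in particular $\Psi_h\neq0$. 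Therefore
$$
\sup_{0\neq\Psi_h\in\Vs_h}\frac{(B^{*}\Psi_h,\psi_h)_{\VLt}}{\|\Psi_h\|_{\Vs}} \ge \frac{\|\psi_h\|_{\VLt}^2}{Cc\|\psi_h\|_{\VLt}} = \frac{1}{Cc}\|\psi_h\|_{\VLt},
$$
which is the inf-sup condition with the $h$-independent constant $\beta=1/(Cc)$. With (1)--(4) verified, Brezzi's theorem yields the well-posedness of Problem \ref{prodiscrete}.

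I expect conditions (1)--(3) to be routine once $B^{*}\Vs_h\subseteq\VLt_h$ is in force; the only real obstacle is the discrete inf-sup, which genuinely needs a stable, $h$-independent Fortin operator — exactly the role of Assumptions (A1)--(A2). For the concrete pair (the $H(\Div,\mathbb{S})$ element and the $H(\dD,\mathbb{S})$ element) the remaining work, which lies outside this theorem, is to construct such a $\Pi_h$ from the known commuting-projection properties of these families on each plate and to verify their compatibility across the junction $\Gamma$.
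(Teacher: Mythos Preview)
Your proposal is correct and follows essentially the same approach as the paper: verify the discrete Brezzi conditions, with boundedness and kernel coercivity inherited from the continuous setting together with the inclusion $B^{*}\Vs_h\subseteq\VLt_h$, and the discrete inf-sup obtained via the Fortin-operator argument built on Assumptions (A1)--(A2). The paper's own proof is a two-line sketch invoking exactly this machinery; your write-up simply fills in the details, and your parenthetical remark about the uniform lifting bound (not explicitly stated in (A1) but needed for the argument) correctly flags the one point the paper leaves implicit.
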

		\begin{proof}
			It is easy to verify that $( \Phi_h, \Psi_h )_{\bCi}$ is a symmetric, nonnegative bilinear form. It remains to show the Brezzi's conditions hold {for \eqref{varfem}}, which can be derived from the continuous counterpart with the help of a Fortin operator, e.g.,\cite{boffi2013mixed}. {The proof is completed by Assumptions $\mathrm{(A1)}$--$\mathrm{(A2)}$ and Theorem \ref{Brezzi}.}
		\end{proof}
		
		Following the standard procedures in \cite{boffi2013mixed}, the well-posedness of Problem \ref{prodiscrete} allows the following error estimate.
		\begin{theorem} 
			\label{theoremConvergence}
			Let $(\Phi,\phi) \in (\Vs,\VLt)$ be the solution of Problem \ref{var} and $(\Phi_h, \phi_h) \in (\Vs_h, \VLt_{h})$ be the solution of Problem \ref{prodiscrete}. Then there exists a constant $C$ independent of mesh-size $h$ such that 
			$$
			\| \Phi -\Phi_h\|_{\Vs} + \| \phi -\phi_h\|_{\VLt} \leq C \operatorname{inf}_{\Psi_h\in \Vs_h, \psi_h \in \VLt_h} ( \| \Phi - \Psi_h \|_{\Vs} + \| \phi- \psi_{h}\|_{\VLt}).
			$$
			{Assume $\phi,\Phi$ are smooth on $S$ and $\tS$. Let $I_h \Phi$ denote the canonical interpolation of $\Phi$ into $\Vs_h$ and let $k = \min\{k_1-1,k_2-2,k_3,k_4\}$. It holds}
			$$
			\| \Phi -\Phi_h\|_{\Vs} + \| \phi -\phi_h\|_{\VLt} \leq C ( \| \Phi - I_h \Phi \|_{\Vs} + \| \phi- Q_h \phi \|_{\VLt}) \leq Ch^{k+1}.
			$$
		\end{theorem}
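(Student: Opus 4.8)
The plan is to obtain both inequalities from the standard theory of mixed finite element methods applied to the well-posed discrete problem, and then to specialize the resulting quasi-optimal bound with the canonical interpolation and the $L^{2}$ projection. The first step is to check that the discrete Brezzi constants are independent of $h$. Since $B^{*}\Vs_h\subseteq\VLt_h$, testing the identity defining $\operatorname{Ker} B_h$ with $\psi_h=B^{*}\Psi_h\in\VLt_h$ shows $\operatorname{Ker} B_h=\{\Psi_h\in\Vs_h:B^{*}\Psi_h=0\}\subseteq\operatorname{Ker} B$, so condition (3) of Theorem~\ref{Brezzi} gives discrete coercivity on $\operatorname{Ker} B_h$ with the same constant $\alpha$ (this inclusion is essential, since $(\bullet,\bullet)_{\bCi}$ is only positive semidefinite on all of $\Vs_h$). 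The discrete inf-sup constant is bounded below by a fixed multiple of $\beta$ via the Fortin operator $\Pi_h$ of Assumption~(A2), exactly as in the proof of the preceding well-posedness theorem, and the boundedness constants $a$ and $b=1$ are inherited from $\Vs$ and $\VLt$. With these $h$-uniform constants, the abstract saddle-point error estimate (see, e.g., \cite{boffi2013mixed}) yields
\[
\| \Phi -\Phi_h\|_{\Vs} + \| \phi -\phi_h\|_{\VLt}\leq C\inf_{\Psi_h\in\Vs_h,\,\psi_h\in\VLt_h}\bigl(\|\Phi-\Psi_h\|_{\Vs}+\|\phi-\psi_h\|_{\VLt}\bigr),
\]
with $C$ independent of $h$; this is the first assertion, and taking $\Psi_h=I_h\Phi$, $\psi_h=Q_h\phi$ gives the second displayed inequality.

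It then remains to bound $\|\Phi-I_h\Phi\|_{\Vs}$ and $\|\phi-Q_h\phi\|_{\VLt}$ by $Ch^{k+1}$. The displacement term is immediate: standard $L^{2}$-projection estimates on piecewise polynomials of degrees $k_3$ and $k_4$ give $\|\phi-Q_h\phi\|_{\VLt}\lesssim h^{k_3+1}(\|\bu\|_{k_3+1,S}+\|\tbu\|_{k_3+1,\tS})+h^{k_4+1}(\|u_3\|_{k_4+1,S}+\|\tu_3\|_{k_4+1,\tS})$. For the stress/moment term, split $\|\Phi-I_h\Phi\|_{\Vs}^{2}=\|\Phi-I_h\Phi\|_{\VsL}^{2}+\|B^{*}(\Phi-I_h\Phi)\|_{\VLt}^{2}$. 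The $\VsL$-part is the ordinary interpolation error of the $H(\divt,\mathbb{S})$ and $H(\dD,\mathbb{S})$ elements, of order $h^{\min\{k_1+1,k_2+1\}}$. The $B^{*}$-part uses the commuting property of the canonical interpolations: $\Div I_h\bN$ and $\dD I_h\bM$ are the piecewise $L^{2}$ projections of $\Div\bN$ and $\dD\bM$ onto polynomials of degrees $k_1-1$ and $k_2-2$, so $\|\Div(\bN-I_h\bN)\|_{0,S}\lesssim h^{k_1}\|\Div\bN\|_{k_1,S}$ and $\|\dD(\bM-I_h\bM)\|_{0,S}\lesssim h^{k_2-1}\|\dD\bM\|_{k_2-1,S}$, with the analogous bounds on $\tS$. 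Since $\Phi=-\bCt B\phi$ is smooth whenever $\phi$ is, collecting these estimates gives $\|\Phi-I_h\Phi\|_{\Vs}\lesssim h^{\min\{k_1-1,k_2-2\}+1}$, and adding the displacement bound yields $Ch^{k+1}$ with $k=\min\{k_1-1,k_2-2,k_3,k_4\}$.

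Everything above is routine, and I expect the one point needing care to be the passage between the abstract framework and the concrete elements, namely that the canonical interpolation $I_h$ is well-defined on the smooth solution $\Phi$, that $I_h\Phi\in\Vs_h$ (i.e.\ $I_h$ preserves the free-boundary, junction, and corner conditions, which $\Phi$ satisfies by Theorem~\ref{continuty} since it is a smooth element of $\Vs$), and that it obeys the commuting-diagram identities used above. For the $H(\divt,\mathbb{S})$ element of \cite{HuZhang2014} and the $H(\dD,\mathbb{S})$ element of \cite{ChenHuang2022} these facts are known, and in the abstract setting they are, up to the distinction between $I_h$ and the Fortin operator $\Pi_h$, precisely the content of Assumption~(A2). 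I would also state explicitly the regularity needed — e.g.\ $\bN,\bu\in H^{k_1+1}$ with $\Div\bN\in H^{k_1}$, together with the corresponding requirements on $\bM,u_3$ and on $\tS$ — which is implied by the assumed smoothness of $\phi$.
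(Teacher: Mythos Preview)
Your proposal is correct and follows essentially the same approach as the paper: the paper does not give a detailed proof at all, stating only that the estimate follows ``following the standard procedures in \cite{boffi2013mixed}'' from the well-posedness of Problem~\ref{prodiscrete}. Your write-up is a careful expansion of exactly those standard procedures (uniform discrete Brezzi constants via the kernel inclusion and the Fortin operator, then the abstract quasi-optimality, then interpolation/projection bounds), so there is no methodological difference to discuss.
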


		{
Any finite element method that satisfies Assumptions $\mathrm{(A1)}$ and $\mathrm{(A2)}$ can be utilized to solve Problem \ref{prodiscrete}. 
Recall the continuity conditions in \eqref{jun3} and \eqref{jun4} from Theorem \ref{continuty} as follows
		$$
		 \tM_{\tn \tn}=M_{nn},~ \tN_{\tn\tn} =  -N_{nn}  \cos \theta + T \sin \theta , 
		~\tN_{\tn\ttt}= N_{nt}, ~\tT=  N_{nn}  \sin \theta + T \cos \theta,\text { on}~\Gamma.
		$$
		It can be observed that the junction conditions contain the terms $N_{nn}, N_{nt},M_{nn}$ and $T$. 
		For ease of programming implementation, two finite element pairs contain degrees of freedom of these terms are chosen, 
		for instance, the {conforming} $H(\divt,\mathbb{S})\times L^2(\mathbb{R}^2)$ mixed elements in \cite{HuZhang2014} and the {conforming} $H(\dD,\mathbb{S})\times L^2$ mixed elements in \cite{ChenHuang2022}.} 
		{Recall that} $T = (\Div \bM)\cdot \bn+\partial_t((\bM \bn)\cdot\bt)$, it follows from the construction of $\Vs_h$ that the condition $k_2=k_1+1$ is necessary for \eqref{jun4} to hold strictly.
		Figure \ref{HZ} and \ref{CH} display the degrees of freedom for these two pairs of $k_1=3, k_3=2$ and $k_2 = 4, k_4=2$, respectively. As shown on the left of Figure \ref{HZ}, except nine  $H(\divt, \mathbb{S})$-bubble functions, the other degrees of freedom of $P_3$-$H(\divt, \mathbb{S})$ include: the {values} of $\bN$ at three vertices, the normal and tangential components of $\bN\bn$ at two distinct points in the interior of each edge. They are indicated by black points and double arrows respectively. 
		On the left of Figure \ref{CH}, the degrees of freedom include the values of $\bM$ at three vertices, the values of $M_{nn}$ at three distinct points in the interior of each edge, and the values of $T$ at four distinct points in the interior of each edge (denoted by arrows). 
		\begin{figure}[ht]
			\begin{tikzpicture}[scale=2]
				\pgfmathparse{sqrt(3)}
				\let \st \pgfmathresult
				\def \m{1.3}
				\def \a{1.5}
				\draw (0,0) -- (2,0); 
				\draw (0,0) -- (1,\a); 
				\draw (2,0) -- (1,\a); 
				\def \left{0.1}
				\def  \points{(0,0),     (0.09,0), (\left/2,0.07),
					(1,\a),     (1-0.045,\a-0.07), (1+0.045,\a-0.07),
					(2,0),     (2-0.09,0),   (2-\left/2,0.07)}
				\foreach \coord in \points {\fill \coord circle (1.3 pt);}
				\node at (1,0.6) {$+9$}; 
				\node[rotate=235] at (0.3,0.6) {$\Downarrow$}; 
				\node[rotate=235] at (0.6,1.05) {$\Downarrow$}; 
				\node at (0.7,-0.08) {$\Downarrow$}; 
				\node at (1.3,-0.08) {$\Downarrow$}; 
				\node[rotate=125]  at (1.7,0.6) {$\Downarrow$}; 
				\node[rotate=125]  at (1.4,1.05) {$\Downarrow$}; 
				
				\begin{scope}[shift={(2.5cm,0)}]
					\draw (0,0) -- (2,0); 
					\draw (0,0) -- (1,\a); 
					\draw (2,0) -- (1,\a); 
					\def  \points{(0.04,0), (-0.04,0),(0.96,0),(1.04,0),(0.96,\a),(1.04,\a),(1.96,0),(2.04,0), (0.46,\a/2),(0.54,\a/2),(1.46,\a/2),(1.54,\a/2)}
					\foreach \coord in \points {\fill \coord circle (1.3 pt);}
				\end{scope}
			\end{tikzpicture}
			\caption{Left and Right are degrees of freedom for a $P_3$-$H(\divt, \mathbb{S})$ element and a discontinous vectorial $P_2$ element, respectively.}
			\label{HZ}
		\end{figure}
		\begin{figure}[ht]
			\begin{tikzpicture}[scale=2,>=latex]
				\pgfmathparse{sqrt(3)}
				\let \st \pgfmathresult
				\def \m{1.3}
				\def \a{1.5}
				\draw (0,0) -- (2,0); 
				\draw (0,0) -- (1,\a); 
				\draw (2,0) -- (1,\a); 
				\def \left{0.1}
				\def  \points{(0,0),     (0.09,0), (\left/2,0.07),
					(1,\a),     (1-0.045,\a-0.07), (1+0.045,\a-0.07),
					(2,0),     (2-0.09,0),   (2-\left/2,0.07), }
				\foreach \coord in \points {\fill \coord circle (1.3 pt);}
				\def  \pointint{(0.5,0),     (1,0),   (1.5,0),
					(0.25,\a/4),  (0.5,\a/2), (0.75,\a/4*3),
					(1.75,\a/4),  (1.5,\a/2), (1.25,\a/4*3), }
				\foreach \coord in \pointint {\fill \coord circle (1.3 pt);}
				\node at (1,0.6) {$+15$}; 
				\draw[->] (0.25,0) -- ++ (0,-0.2);
				\draw[->] (0.75,0) -- ++ (0,-0.2);
				\draw[->] (1.25,0) -- ++ (0,-0.2);
				\draw[->] (1.75,0) -- ++ (0,-0.2);
				\draw[->,rotate around={235:(0.125,0.1875)}] (0.125,0.1875) -- ++ (0,-0.2);
				\draw[->,rotate around={235:(0.375,0.5625)}] (0.375,0.5625) -- ++ (0,-0.2);
				\draw[->,rotate around={235:(0.625,0.9375)}] (0.625,0.9375) -- ++ (0,-0.2);
				\draw[->,rotate around={235:(0.875,1.3125)}] (0.875,1.3125) -- ++(0,-0.2); 
				\draw[->,rotate around={125:(1.125,1.3125)}] (1.125,1.3125) -- ++ (0,-0.2);
				\draw[->,rotate around={125:(1.375,0.9375)}] (1.375,0.9375) -- ++ (0,-0.2);
				\draw[->,rotate around={125:(1.625,0.5625)}] (1.625,0.5625) -- ++ (0,-0.2);
				\draw[->,rotate around={125:(1.875,0.1875)}] (1.875,0.1875) -- ++(0,-0.2); 
				\begin{scope}[shift={(2.5cm,0)}]
					\draw (0,0) -- (2,0); 
					\draw (0,0) -- (1,\a); 
					\draw (2,0) -- (1,\a); 
					\def  \points{(0,0), (1,0),(1,\a), (2,0), (0.5,\a/2), (1.5,\a/2)}
					\foreach \coord in \points {\fill \coord circle (1.3 pt);}
				\end{scope}
			\end{tikzpicture}
			\caption{Left and Right are degrees of freedom for a $P_4$-$H(\dD,\mathbb{S})$ element and a discontinous $P_2$ element, respectively.}
			\label{CH}
		\end{figure}
		
		For each midsurface $S$ and $\tS$, the implementation follows the similar procedures of mixed finite element methods for a single plate. As mentioned in Theorem \ref{continuty}, the only difference is to deal with the junction conditions \eqref{jun3}--\eqref{jun4} and corner conditions \eqref{cornercondition}. For the junction condition \eqref{jun3}, i.e., $M_{nn} = \tM_{\tn\tn}$ on $\Gamma$. The restriction of these functions to $\Gamma$ are polynomials of degree four in one variable. Thus, it suffices to satisfy \eqref{jun3} at the three points inside the edge together with the endpoints. For the junction conditions \eqref{jun4}, similarly, it suffices to satisfy these conditions at the points related to the degrees of freedom.
		
		\newcommand{\tx}{\underaccent{\tilde}{x}}
		\newcommand{\ty}{\underaccent{\tilde}{y}}
				\subsection{Numerical test} Two numerical experiments are carried out in this subsection to verify the theoretical results established {in Theorem \ref{theoremConvergence}. } 
		\begin{ex}
			\label{FE}
			Consider two coupled plates shown in Figure \ref{FigJunction} with two local coordinates $(\bn,\bt,\bl)$ and $(\tbn, \tbt,\tbl)$ located at the middle point of $\Gamma$. The domain in local coordinates are {$S=(-1,0)\times (-1,1)$, $\tS = (-1,0)\times (-1,1)$ }and the angle $ \theta = \pi/2$. In this example, $E = 3000, \nu = 0, e = 0.124$. 
			Let the exact solution be chosen as 
			\begin{equation}
				\label{Example1}
				\begin{aligned}
					(\bu, u_3) &=  ( -(1-x^2)^2(1-y^2)^2, 	(1-x^2)^2(1-y^2)^2,(1-x^2)^2(1-y^2)^2).\\ 
					(\tbu, \tu_3) &=  ( -(1-\tx^2)^2(1-\ty^2)^2, 	-(1-\tx^2)^2(1-\ty^2)^2,(1-\tx^2)^2(1-\ty^2)^2).\\ 
				\end{aligned}
			\end{equation}
			The external force $F$, the boundary forces, and moments in equations \eqref{bd2}--\eqref{bd3} can be determined through straightforward calculations.
			It can be verified that the solution \eqref{Example1} satisfies the clamped boundary condition \eqref{bd1} on $\partial S_0$ and junction conditions \eqref{jun1}--\eqref{jun3} on $\Gamma$. Nevertheless, this solution fails to satisfy the homogenous junction condition $N_{nt} - \tN_{\tn\ttt} =0$, while it does satisfy $N_{nt} - \tN_{\tn\ttt} = 2N_{nt} $. Consequently, the jump condition in this example is nonhomogenous. 
		\end{ex}
		
		
		In this specific example, the chosen solution \eqref{Example1} is smooth enough on each middle surface. Therefore, Theorem \ref{theoremConvergence} shows the following result 
		$$
		\| \Phi -\Phi_h\|_{\Vs} + \| \phi -\phi_h\|_{\VLt} \leq C h^{3}.
		$$
		It is observed from Table \ref{tabHZ1} and Table \ref{tabHC1}  that the convergence rates of $(\bN, \bM)$ and $(\bu, u_3)$ for the middle surface $S$ are both $O(h^3)$ in $\Sigma$ and $V$ norms, respectively. {Moreover, the rates of $(\bN, \bM)$ in $L^2$ norms are $O(h^4)$ and $O(h^5)$, both of which are optimal. }{{The mesh is uniformly refined.}} For the middle surface $\tS$, the similar conclusion can be derived from Table \ref{tabHZ2} and Table \ref{tabHC2}. These numerical results {concide with} the theoretical result in Theorem \ref{theoremConvergence}.

			\newpage
			\begin{table}[ht]
				\centering
				\begin{tabular}{ccccccc}
					\toprule[1pt]
					Mesh & $\|\bN-\bN_h\|_{0}$ & Order & $\|\bu-\bu_h\|_{0}$ & Order & $\|   \bN - \bN_h \|_{H(\divt)}$ & Order \\
					\midrule
					1     & $3.85439 \rE{+01}$ & - & $6.31904 \rE{-02}$ & - & $1.37373 \rE{+02}$ & - \\
					2     & $3.86755 \rE{+00}$ & $3.32$ & $1.02984 \rE{-02}$ & $2.62$ & $2.07569 \rE{+01}$ & $2.73$ \\
					3    & $2.69636 \rE{-01}$ & $3.84$ & $1.36836 \rE{-03}$ & $2.91$ & $2.76788 \rE{+00}$ & $2.91$ \\
					4    & $1.71509 \rE{-02}$ & $3.97$ & $1.73916 \rE{-04}$ & $2.98$ & $3.51847 \rE{-01}$ & $2.98$ \\
					5   & $1.07473 \rE{-03}$ & $4.00$ & $2.18326 \rE{-05}$ & $2.99$ & $4.41676 \rE{-02}$ & $2.99$ \\
					\bottomrule[1pt]
				\end{tabular}
				\caption{{Errors for} the in-plane pair on $S$.}
				\label{tabHZ1}
			\end{table}
			
			\begin{table}[ht]
				\centering
				\begin{tabular}{cccccccc}
					\toprule[1pt]
					Mesh & $\| \bM - \bM_h\|_{0}$ & Order & $\|u_3-u_{3h}\|_{0}$ & Order & $\| \bM- \bM_{h}\|_{H(\dD)}$ & Rate \\
					\midrule
					1     & $1.83015 \rE{-01}$ & - & $4.27046 \rE{-02}$ & - & $3.18201 \rE{+00}$ & - \\
					2     & $7.75092 \rE{-03}$ & $4.56$ & $7.24994 \rE{-03}$ & $2.56$ & $4.34361 \rE{-01}$ & $2.87$ \\
					3    & $2.70767 \rE{-04}$ & $4.84$ & $9.67331 \rE{-04}$ & $2.91$ & $5.53801 \rE{-02}$ & $2.97$ \\
					4    & $8.35573 \rE{-06}$ & $5.02$ & $1.22977 \rE{-04}$ & $2.98$ & $6.95601 \rE{-03}$ & $2.99$ \\
					5   & $2.50436 \rE{-07}$ & $5.06$ & $1.54381 \rE{-05}$ & $2.99$ & $8.70545 \rE{-04}$ & $3.00$ \\
					\bottomrule[1pt]
				\end{tabular}
				\caption{Errors for the out-of-plane pair on $S$.}
				\label{tabHC1}
			\end{table}
			
			\begin{table}[ht]
				\centering
				\begin{tabular}{cccccccc}
					\toprule[1pt]
					Mesh & $\| \tbN - \tbN_h\|_{0} $ & Order & $\|\tbu-\tbu_h\|_{0} $ & Order & $\| \tbN - \tbN_h \|_{H(\tdiv)} $ & Order \\
					\midrule
					1     & $3.04348 \rE{+01}$ & - & $6.18553 \rE{-02}$ & - & $2.23788 \rE{+02}$ & - \\
					2     & $3.16175 \rE{+00}$ & $3.27$ & $1.02746 \rE{-02}$ & $2.59$ & $4.34492 \rE{+01}$ & $2.36$ \\
					3    & $2.22673 \rE{-01}$ & $3.83$ & $1.36833 \rE{-03}$ & $2.91$ & $6.04352 \rE{+00}$ & $2.85$ \\
					4    & $1.42905 \rE{-02}$ & $3.96$ & $1.73919 \rE{-04}$ & $2.98$ & $7.75373 \rE{-01}$ & $2.96$ \\
					5   & $8.97872 \rE{-04}$ & $3.99$ & $2.18328 \rE{-05}$ & $2.99$ & $9.75505 \rE{-02}$ & $2.99$ \\
					\bottomrule[1pt]
				\end{tabular}
				\caption{Errors for the in-plane pair on {$\protect \tS$}.}
				\label{tabHZ2}
			\end{table}
			
			\begin{table}[ht]
				\centering
				\begin{tabular}{cccccccc}
					\toprule[1pt]
					Mesh & $\| \tbM - \tbM_h\|_{0} $ & Order & $\| \tu_3- \tu_{3h}\|_{0} $ & Order & $\| \tbM  -\tbM_h \|_{H(\tdD)} $ & Order \\
					\midrule
					1     & $3.88066 \rE{-01}$ & - & $4.25016 \rE{-02}$ & - & $3.18201 \rE{+00}$ & - \\
					2     & $1.61729 \rE{-02}$ & $4.58$ & $7.24375 \rE{-03}$ & $2.55$ & $4.34361 \rE{-01}$ & $2.87$ \\
					3    & $4.39241 \rE{-04}$ & $5.20$ & $9.67361 \rE{-04}$ & $2.90$ & $5.53801 \rE{-02}$ & $2.97$ \\
					4    & $1.18672 \rE{-05}$ & $5.21$ & $1.22980 \rE{-04}$ & $2.98$ & $6.95601 \rE{-03}$ & $2.99$ \\
					5   & $3.66826 \rE{-07}$ & $5.02$ & $1.54382 \rE{-05}$ & $2.99$ & $8.70545 \rE{-04}$ & $3.00$ \\
					\bottomrule[1pt]
				\end{tabular}
				\caption{Errors for the out-of-plane pair on $\protect \tS$.}
				\label{tabHC2}
			\end{table}

			\begin{ex}
				\label{ex2}
				This example considers two coupled plates with {the} rigid junction $\Gamma$ shown in Figure \ref{FigJunction2}. Given the global coordinate $(X, Y,Z)$, the left one $S$ is clamped on $\partial S_0$ while the right one is loaded along one edge of $\tpar \tS_1$ by a line density of forces $P_{z} = -1$ lb/in in global coordinate. In this example, $E = 3\times 10^7$ psi, $\nu=0$, $\alpha=30^{\circ}$, $e=0.124$ in. The length and width of the plates are shown in Figure \ref{FigJunction2}. Calculate the displacements of points $A(0,2.52\cos \alpha ), B(1/3,2.52\cos \alpha ), C(2/3,2.52\cos \alpha ),$ and $D(1,2.52\cos \alpha )$ in the global $Z$-axis. {{Figure \ref{FigMesh} shows the first mesh and the mesh is uniformly refined.}}
				
				\begin{figure}[ht]
					\centering
					\begin{tikzpicture}[>=latex]
						\draw (0,2) -- (1.5,5); 
						\draw (0,2) -- (4,1);  
						\draw (4,1) -- (5.5,4) 	
						node[pos=0, right]{$D$}
						node[pos=0.3, right]{$C$}
						node[pos=0.6, right]{$B$}
						node[pos=0.9, right]{$A$};
						\draw (5.5,4) -- (1.5,5);
						
						\filldraw [black] (4,1) circle (2pt);
						\filldraw [black] (4.5,2) circle (2pt);
						\filldraw [black] (5,3) circle (2pt);
						\filldraw [black] (5.5,4) circle (2pt);
						
						\draw (0,2) -- (1.5,5) node[pos=0.5, left]{$\Gamma$};;
						\draw (0,2) -- (-4,1);
						\draw (-4,1) -- (-2.5,4) node[pos=0.5, right]{$\partial S_0$};
						\draw (-2.5,4) -- (1.5,5);
						\foreach \x in {1,1.3,...,4.3}{
							\pgfmathsetmacro{\resultA}{\x -1}
							\pgfmathsetmacro{\step}{\resultA /0.3}
							\pgfmathsetmacro{\steps}{\step*0.15}
							\pgfmathsetmacro{\resultB}{-4 + \steps}
							\draw (\resultB,\x) -- ++ ( 0, 0.25);}
						
						\foreach \x in {1,1.5,...,4.3}{
							\pgfmathsetmacro{\resultA}{\x -1}
							\pgfmathsetmacro{\step}{\resultA /0.3}
							\pgfmathsetmacro{\steps}{\step*0.15}
							\pgfmathsetmacro{\resultB}{ 4 + \steps}
							\draw[<-]  (\resultB,\x) -- ++ (0, 1);}
						
						\draw (-2,1) coordinate (A) -- (-4,1) coordinate (B)
						-- (-2,1.5) coordinate (C)
						pic ["$\alpha$", draw, angle eccentricity=2] {angle};
						\draw (2,1.5) coordinate (A) -- (4,1) coordinate (B)
						-- (2,1) coordinate (C)
						pic ["$\alpha$", draw, angle eccentricity=2] {angle};
						
						\draw [|<->|] (0.2,2) -- node[right=0.1mm] {1.0 in} (1.7,5);
						\draw [|<->|] (1.5,5.2) -- node[above=0.1mm] {1.26 in} (5.5,4.2);
						\draw [|<->|] (-2.5,4.2) -- node[above=0.1mm] {1.26 in} (1.5,5.2);
						
						\draw [line width=0.2mm, -{Stealth[length= 3mm, open]}]
						(-2.5,4) -- (-2.5,5.5) node[pos=0.7, right]{$Z$};
						\draw [line width=0.2mm, -{Stealth[length= 3mm, open]}]
						(5.5,4) -- (7, 4) node[pos=0.7, below]{$Y$};		
						\draw [line width=0.2mm, -{Stealth[length= 3mm, open]}]
						(-4,1) -- (-4.5, 0) node[pos=0.7, right]{$X$};
					\end{tikzpicture}
					\caption{The coupled  plates with {the} rigid junction of Example \ref{ex2}.}
					\label{FigJunction2}
				\end{figure}
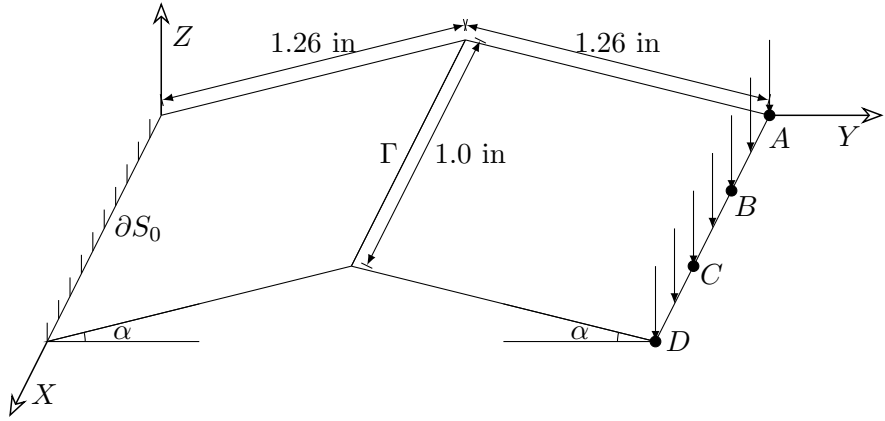

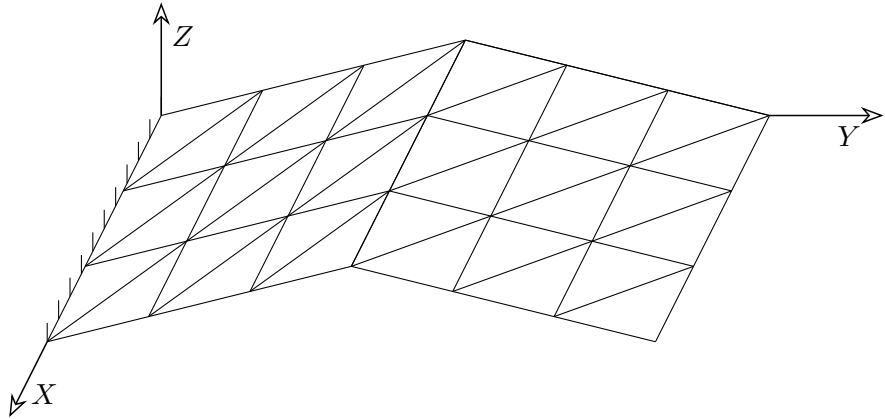
\begin{figure}[ht]
					\centering
					\begin{tikzpicture}[>=latex]
						\draw (0,2) -- (1.5,5); 
						\draw (0,2) -- (4,1);  
						\draw (4,1) -- (5.5,4);
						\draw (5.5,4) -- (1.5,5);
						\draw (5.5,4) -- (1.5,5);
						
						\draw (4/3,5/3) -- (1.5+4/3, 14/3);
						\draw (8/3,4/3) -- (1.5+8/3, 13/3);
						\draw (0.5,3) -- (4.5,2);
						\draw (1,4)     -- (5,3);
						\draw (1,4)     -- (1.5+4/3, 14/3);
						\draw (0.5,3) -- (1.5+8/3, 13/3);
						\draw (0,2)    -- (5.5,4);
						\draw (4/3,5/3) -- (5,3);
						\draw (8/3,4/3) -- (4.5, 2);
						
						\draw(-8/3,4/3) -- (-2.5+4/3,13/3);
						\draw (-4/3,5/3) -- (-2.5+8/3,14/3);
						\draw (-3.5,2) -- (0.5,3);
						\draw (-3,3) -- (1,4);
						\draw (-3,3) -- (-2.5+4/3, 13/3);
						\draw (-3.5,2) -- (-2.5+8/3,14/3);
						\draw (-4,1) -- (1.5,5);
						\draw (-8/3,4/3) -- (1,4);
						\draw (-4/3,5/3) -- (0.5,3);
						
						\draw (0,2) -- (1.5,5) ;
						\draw (0,2) -- (-4,1);
						\draw (-4,1) -- (-2.5,4) ;
						\draw (-2.5,4) -- (1.5,5);
						\foreach \x in {1,1.3,...,4.3}{
							\pgfmathsetmacro{\resultA}{\x -1}
							\pgfmathsetmacro{\step}{\resultA /0.3}
							\pgfmathsetmacro{\steps}{\step*0.15}
							\pgfmathsetmacro{\resultB}{-4 + \steps}
							\draw (\resultB,\x) -- ++ ( 0, 0.25);}

						

						\draw [line width=0.2mm, -{Stealth[length= 3mm, open]}]
						(-2.5,4) -- (-2.5,5.5) node[pos=0.7, right]{$Z$};
						\draw [line width=0.2mm, -{Stealth[length= 3mm, open]}]
						(5.5,4) -- (7, 4) node[pos=0.7, below]{$Y$};		
						\draw [line width=0.2mm, -{Stealth[length= 3mm, open]}]
						(-4,1) -- (-4.5, 0) node[pos=0.7, right]{$X$};
					\end{tikzpicture}
					\caption{The first mesh of Example \ref{ex2}.}
					\label{FigMesh}
				\end{figure}

				\begin{table}[ht]
					\centering
					\begin{tabular}{ccccc}
						\toprule[1pt]
						& \multicolumn{4}{c}{Displacement in the global $Z$-axis }\\
						\hline
						Mesh   &  A &  B  & C  &  D\\
						\hline
						1   & -8.35998E-4 & -8.38402E-4 & -8.40832E-4 &  -8.43207E-4 \\
						2   & -8.37679E-4 &-8.38905E-4 & -8.40137E-4&  -8.41361E-4\\
						3  & -8.38579E-4 & -8.39198E-4 & -8.39819E-4&  -8.40438E-4\\
						4 & -8.39040E-4 & -8.39351E-4 & -8.39663E-4&  -8.39974E-4\\
						5 & -8.39273E-4 & -8.39429E-4 & -8.39585E-4&  -8.39741E-4\\
						\toprule[1pt]
					\end{tabular}
					\caption{{Results by mixed finite element pairs.}}
					\label{tabu3mix}
				\end{table}

				\begin{table}[ht]
					\centering
					\begin{tabular}{ccccc}
						\toprule[1pt]
						& \multicolumn{4}{c}{Displacement in the global $Z$-axis }\\
						\hline
						Mesh   &  A &  B  & C  &  D\\
						\hline
						1   & -8.56525E-4 & -8.56744E-4 & -8.56745E-4 &  -8.56525E-4 \\
						2   &-8.43746E-4 & -8.43816E-4 & -8.43816E-4&  -8.43746E-4\\
						3  & -8.40565E-4 & -8.40584E-4 &-8.40584E-4&  -8.40564E-4\\
						4 & -8.39771E-4 & -8.39776E-4   & -8.39776E-4&  -8.39771E-4\\
						5 & -8.39573E-4 & -8.39574E-4   & -8.39574E-4&  -8.39573E-4\\
						\toprule[1pt]
					\end{tabular}
					\caption{{Results by noncomforming finite element pairs.}}
					\label{tabu3non}
				\end{table}
				

			\end{ex}	
			In \cite{bernadou1989numerical}, the conforming finite element pair, reduced Hermite element and reduced HCT element, are used. It provides the result $Z_{B} = -8.37166$E-4 in and $Z_{C} = -8.37274$E-4 in on the first mesh. Table \ref{tabu3mix} shows the results of the mixed pairs at the beginning of this subsection. It follows from Table \ref{tabu3mix} that the displacements are convergence to $-8.395$E-4 in. Table \ref{tabu3non} shows the results of the nonconforming finite element pair, linear Lagrange element and Morley element, which can be found in \cite{lie1992mathematical}. It can be seen from Table \ref{tabu3non} that the displacements are convergence to $-8.396$E-4 in. {These results show the displacements of the cantilever plate by mixed method are in good agreement with the results from the methods based on displacements.	}
			
			\bibliographystyle{plain}
			\bibliography{reference}

		\end{document}